  \numberwithin{equation}{section}
  \numberwithin{figure}{section}
  \numberwithin{table}{section}
\newtheorem{theorem}{Theorem}[section]
\newtheorem{lemma}[theorem]{Lemma}
\newtheorem{proposition}[theorem]{Proposition}
\newtheorem*{assumption}{Assumption}
\newtheorem*{notation}{Notation}
\newtheorem*{remark}{Remark}
\def\Xint#1{\mathchoice
    {\XXint\displaystyle\textstyle{#1}}%
    {\XXint\textstyle\scriptstyle{#1}}%
    {\XXint\scriptstyle\scriptscriptstyle{#1}}%
    {\XXint\scriptscriptstyle\scriptscriptstyle{#1}}%
      \!\int}
\def\XXint#1#2#3{{\setbox0=\hbox{$#1{#2#3}{\int}$}
    \vcenter{\hbox{$#2#3$}}\kern-.5\wd0}}
\def\dashint{\Xint-}
\DeclareMathOperator*{\argmin}{argmin}
\DeclareMathOperator{\tr}{tr}
\DeclareMathOperator{\Sym}{Sym}
\begin{document}
  \thispagestyle{plain}
  \title{Least squares estimators based on the Adams method for stochastic differential equations with small L\'evy noise}
  \author{Mitsuki Kobayashi \\
          Department of Pure and Applied Mathematics, Waseda University,\\
          3-4-1 Ohkubo, Shinjuku-ku, Tokyo 169-8555, JAPAN \\
          (E-mail: mitsuki@fuji.waseda.jp) \\
          and \\
          Yasutaka Shimizu \\
          Department of Applied Mathematics, Waseda University, \\
          3-4-1 Ohkubo, Shinjuku-ku, Tokyo 169-8555, JAPAN \\
          (E-mail: shimizu@waseda.jp)}
  \date{}
  \maketitle

\begin{abstract}
  We consider stochastic differential equations (SDEs) driven by small L\'evy noise with some unknown parameters, and propose a new type of least squares estimators based on discrete samples from the SDEs. To approximate the increments of a process from the SDEs, we shall use not the usual Euler method, but the Adams method, that is, a well-known numerical approximation of the solution to the ordinary differential equation appearing in the limit of the SDE. We show the consistency of the proposed estimators as well as the asymptotic distribution in a suitable observation scheme.
  We also show that our estimators can be better than the usual LSE based on the Euler method in the finite sample performance.\\

  MSC(2010): Primary 62M05; secondary 62F12, 60J75

  Keywords: SDE driven by L\'evy noise; the Adams method; small noise asymptotics; asymptotic distribution; discrete observations.
\end{abstract}

\section{Introduction}

  This paper is concerned with the following $\mathbb{R}^d$-valued stochastic differential equation
  \begin{equation}\label{eq: main SDE}
    dX_t^{\varepsilon,\theta} = b(X_t^{\varepsilon,\theta},\theta) \, dt + \varepsilon \, dL_t,
    \quad X_0^{\varepsilon,\theta} = x_0 \in \mathbb{R}^d,
  \end{equation}
  where $\Theta_0$ is a smooth bounded open convex set in $\mathbb{R}^p$ with $p\in\mathbb{N}$,
  $\Theta$ denotes the closure of $\Theta_0$,
  $\theta\in\Theta$,
  $\varepsilon>0$,
  $b$ is a function from $\mathbb{R}^d\times\Theta$ to $\mathbb{R}^d$,
  and $L=(L_t)_{t\leq0}$ is a $d$-dimensional L\'{e}vy process given by
  \begin{equation*}
    L_t = a t + \sigma B_t + \int_0^t \int_{|z|\leq1} z \, \tilde{N}(ds,dz) + \int_0^t \int_{|z|>1} z \, N(ds,dz)
  \end{equation*}
  with $a\in\mathbb{R}^d$, a $d\times r$ real-valued matrix $\sigma$, an $r$-dimensional standard Brownian motion $B_t$,
  an independent Poisson random measure $N(ds,dz)$ with characteristic measure $dt\,\nu(dz)$,
  and a martingale measure $\tilde{N}(ds,dz)=N(ds,dz)-\nu(dz)ds$.
  Here, we assume that $\nu(dz)$ is a L\'evy measure on $\mathbb{R}^d\setminus\{0\}$ and $\int_{|z|>0} |z| \,\nu(dz)<\infty$.
  Suppose that we have discrete data $X_{t_0}^{\varepsilon}, \dots, X_{t_n}^{\varepsilon}$ from \eqref{eq: main SDE} under $\theta = \theta_0 \in \Theta_0$ with $X_t^\varepsilon := X_t^{\varepsilon,\theta_0}$,
  and that $0=t_0<\dots<t_n=1$ and $t_i - t_{i-1} = 1/n$.
  We consider the problem of estimating the true $\theta_0\in\Theta_0$ under $n\to\infty$ and $\varepsilon\to0$ at the same time.
  We also define $x_t$ as the solution of the corresponding deterministic differential equation
  \begin{equation}\label{eq: main ODE}
    \frac{dx_t}{dt} = b(x_t, \theta_0)
  \end{equation}
  with the initial condition $x_0$.

  Problems of parametric estimation for discretely observed stochastic processes with small diffusion have been studied by various authors (\textit{e.g.}, Genon-Catalot \cite{genon-catalot1990maximum}, Laredo \cite{laredo1990sufficient}, S{\o}rensen and Uchida \cite{sorensen2003small} and so on) and problems of ones with small L\'evy noise have been studied by Long \textit{et al.} \cite{long2013least},
  Long \textit{et al.} \cite{long2017least} and references therein, while the performance of such estimators become better when ‘large shocks’ due to noise are truncated (see Shimizu \cite{shimizu2017threshold}).

  Before constructing our LSEs, let us introduce the well-known Adams method in numerical analysis for ODEs (see, \textit{e.g.}, Butcher \cite{butcher2016numerical}, Hairer \textit{et al.} \cite{hairer1993solving}, Hairer and Wanner \cite{hairer2010solving}
  and Iserles \cite{iserles2008}),
  which is the combinations of two methods as preditor-corrector pair, says, the Adams-Bashforth and the Adams-Moulton formulae. For instance, to compute an approximate value $\hat{x}_{t_k}$ of the solution of \eqref{eq: main ODE} at $t=t_k$, we firstly prepare a predictor $x^*_{t_k}$ given by Adams-Bashforth method with $\ell=1,2,\dots$ as
  \begin{equation}
    x_{t_k}^* = \hat{x}_{t_{k-1}} + \frac{1}{n} \sum_{\nu=1}^{\ell} \gamma_{\ell\nu} b(\hat{x}_{t_{k-\nu}},\theta_0), \quad
    \gamma_{\ell\nu} := \frac{(-1)^{\nu-1}}{(\nu-1)!(\ell-\nu)!} \int_0^1 \prod_{\substack{j=1 \\  j\neq\nu}}^\ell (u+j-1) \, du, \label{eq: def of adams-bashforth}
  \end{equation}
  by using the past approximate values $\hat{x}_{t_{k-1}},\dots,\hat{x}_{t_{k-\ell}}$ with $\hat{x}_{t_0}=x_{0}$,
  and we secondly modify the value $x^*_{t_k}$ to a corrector $\hat{x}_{t_k}$ given by Adams-Moulton method as
  \begin{equation}
    \hat{x}_{t_k} = \hat{x}_{t_{k-1}} + \frac{1}{n}\beta_{\ell0} b(x_{t_k}^*,\theta_0) + \frac{1}{n} \sum_{\nu=1}^{\ell} \beta_{\ell\nu} b(\hat{x}_{t_\nu},\theta_0), \quad
    \beta_{\ell\nu} := \frac{(-1)^{\nu}}{\nu!(\ell-\nu)!} \int_0^1 \prod_{\substack{j=0 \\  j\neq\nu}}^\ell (u+j-1) \, du. \label{eq: def of adams-moulton}
  \end{equation}
  Both formulae follows by the same argument as in Section 2.1 in Iserles \cite{iserles2008},
  and the predictor-corrector scheme is written in Hairer and Wanner \cite{hairer2010solving}.
  Some of the values of the coefficients $\gamma_{\ell\nu}$, $\beta_{\ell\nu}$ can be seen in Table 244 in Butcher \cite{butcher2016numerical}.
  Here, we remark that for any $g:\mathbb{R}\to\mathbb{R}$, the coefficients $\gamma_{\ell\nu}$ and $\beta_{\ell\nu}$ satisfy
  \begin{align}
    &\int_{t_{k-1}}^{t_k} P(s;g,t_{k-1},\dots,t_{k-\ell}) \, ds = \frac{1}{n} \sum_{\nu=1}^{\ell} \gamma_{\ell\nu} g(x_{t_{k-\nu}},\theta),\\
    &\int_{t_{k-1}}^{t_k} P(s;g,t_k,\dots,t_{k-\ell}) \, ds = \frac{1}{n} \sum_{\nu=0}^{\ell} \beta_{\ell\nu} g(x_{t_{k-\nu}},\theta), \label{eq: polynomial}
  \end{align}
  where $s\mapsto P(s;g,t_k,\dots,t_{k-\ell})$ is the Lagrange interpolating polynomial through the points $(s,g(s))$, $s=t_k,\dots,t_{k-\ell}$ (see, \textit{e.g.}, Section III.1 in Hairer \textit{et al.} \cite{hairer1993solving}).
  In particular, substituting $g\equiv 1$, we have
  \begin{equation}
    \sum_{\nu=1}^\ell \gamma_{\ell\nu} = \sum_{\nu=0}^\ell \beta_{\ell\nu} = 1.
  \end{equation}

  The Euler method sometimes fails to approximate the solution of ODEs
  (\textit{e.g.}, $b(x,\theta)=-\theta x$ for $x,\theta>0$ and $\theta/n\notin(0,2)$, in Section 4.2 in Iserles \cite{iserles2008}), and is less accurate than the Runge-Kutta method, the Adams method, etc.
  For linearity and simplicity, we employ the Adams method and define
  the \textit{Adams-Moulton type} contrast function $\Psi_{n,\varepsilon,\ell}(\theta)$ as
  \begin{equation}
    \Psi_{n,\varepsilon,\ell}(\theta) := \sum_{k=\ell\vee1}^{n} \frac{ \left | X_{t_k}^{\varepsilon} - X_{t_{k-1}}^{\varepsilon} - \frac{1}{n} A_\ell b(\pmb{X}_{t_k:t_{k-\ell}}^{\varepsilon}, \theta) \right |^2 }{\varepsilon^2/n},
  \end{equation}
  where $\pmb{X}_{t_k:t_{k-\ell}}^\varepsilon := (X_{t_k}^\varepsilon, \dots, X_{t_{k-\ell}}^\varepsilon)$
  and $A_\ell$ is the operator from $C^{\ell+1}(\mathbb{R}^d;\mathbb{R}^d)$ to $C^{\ell+1}(\mathbb{R}^{d\times\ell};\mathbb{R}^d)$ of the form
  \begin{equation}
    A_\ell f(\pmb{x}) := \sum_{\nu=0}^{\ell} {\beta_{\ell\nu}} f(x_\nu) \qquad \text{for}~\pmb{x}=(x_0,\dots,x_{\ell}) \in \mathbb{R}^{d\times\ell},~\theta\in\Theta,
  \end{equation}
  in particular,
  \begin{equation}
    A_\ell b(\pmb{X}_{t_k:t_{k-\ell}}^{\varepsilon}, \theta) = \sum_{\nu=0}^{\ell} {\beta_{\ell\nu}} b(X_{t_{k-\nu}},\theta). \label{eq: adams-moulton sum}
  \end{equation}
  For simplicity of discussion, it is useful to use the following form for the contrast function
  \begin{equation}
    \Phi_{n,\varepsilon,\ell}(\theta) := \varepsilon^2 (\Psi_{n,\varepsilon,\ell}(\theta) - \Psi_{n,\varepsilon,\ell}(\theta_0)).
  \end{equation}
  Then the LSE is given by
  \begin{equation}\label{eq: def of estimator theta_n}
    \hat{\theta}_{n,\varepsilon,\ell} := \argmin_{\theta\in\Theta} \Psi_{n,\varepsilon,\ell}(\theta)
    = \argmin_{\theta\in\Theta} \Phi_{n,\varepsilon,\ell}(\theta).
  \end{equation}
  Similarly, we denote by $\tilde{\Psi}_{n,\varepsilon,\ell}$ the \textit{Adams-Bashforth type} contrast function
  \begin{equation}
    \tilde{\Psi}_{n,\varepsilon,\ell}(\theta) := \sum_{k=\ell}^{n} \frac{ \left | X_{t_k}^{\varepsilon} - X_{t_{k-1}}^{\varepsilon} - \frac{1}{n} \tilde{A}_\ell b(\pmb{X}_{t_{k-1}:t_{k-\ell}}^{\varepsilon}, \theta) \right |^2 }{\varepsilon^2/n},
  \end{equation}
  where
  \begin{equation}
    \tilde{A}_\ell b(\pmb{X}_{t_{k-1}:t_{k-\ell}}^{\varepsilon}, \theta) = \sum_{\nu=1}^{\ell} {\gamma_{\ell\nu}} b(X_{t_{k-\nu}},\theta). \label{eq: adams-bashforth sum}
  \end{equation}
  Then the LSE $\tilde{\theta}_{n,\varepsilon,\ell}$ is given by
  \begin{equation}\label{eq: def of estimator theta_n via adams-bashforth}
    \tilde{\theta}_{n,\varepsilon,\ell} := \argmin_{\theta\in\Theta} \tilde{\Psi}_{n,\varepsilon,\ell}(\theta).
  \end{equation}
  We call $\hat{\theta}_{n,\varepsilon,\ell}$ and $\tilde{\theta}_{n,\varepsilon,\ell}$ the \textit{Adams-Moulton type} LSE and the \textit{Adams-Bashforth type} LSE, respectively.

  \begin{notation}
    The following notations will be needed throughout the paper:
    \begin{align}
      &\bullet~\mathbb{N}_0:=\mathbb{N}\cup\{0\},~B_M\subset\mathbb{R}^d~\text{is a closed ball centered at the origin with radius $M>0$}.\\
      &\bullet~C^{\infty,0}(\mathbb{R}^d\times\Theta;\mathbb{R}^d)
        := \left \{ f:\mathbb{R}^d\times\Theta\to\mathbb{R}^d \, \middle | \, \begin{aligned}
          &\text{$f$ is smooth with respet to $x\in\mathbb{R}^d$, and for}\\
          &\text{all $k\in\mathbb{N}$, the $k$-th derivatives of $f$ with}\\
          &\text{respect to $x\in\mathbb{R}^d$ are continuous on $\mathbb{R}^d\times\Theta$}
        \end{aligned} \right \}.\\
      &\bullet~\partial_{\theta_j}:={\textstyle \frac{\partial}{\partial \theta_j}}~\text{with $j=1,\dots,p,$}~
        D_x^\alpha:={\textstyle \frac{\partial^{|\alpha|}}{\partial x_1^{\alpha_1}\dots\partial x_d^{\alpha_d}}}~
        \text{with $\alpha\in\mathbb{N}_0^d$, $|\alpha|=\alpha_1+\cdots+\alpha_d$.}\\
      &\begin{aligned}
        \:\bullet~
          &\left \| f \right \|_{C^{\infty,0}(B_M\times\Theta)}
            := \sup_{\alpha\in\mathbb{N}_0^d} \left \| D_x^\alpha f \right \|_{C(B_M\times\Theta)}
            = \sup_{\alpha\in\mathbb{N}_0^d} \sup_{(x,\theta)\in B_M\times\Theta} \left | D_x^\alpha f (x,\theta) \right |, \\
          &\left \| f(\theta_0) \right \|_{C^\infty(B_M)}
            := \sup_{\alpha\in\mathbb{N}_0^d} \left \| D_x^{\alpha} f (\cdot,\theta_0) \right \|_{C(B_M)}
            = \sup_{\alpha\in\mathbb{N}_0^d} \sup_{x\in B_M} \left | D_x^{\alpha} f (x,\theta_0) \right |, \\
          &\text{where}~f\in C^{\infty,0}(\mathbb{R}^d\times\Theta;\mathbb{R}^d),~M>0.
      \end{aligned}\\
      &\textstyle \bullet~\dashint_{t_{k-1}}^{t_k} f(t) \, dt ~\text{denotes the average integral}~ \frac{1}{|t_{k}-t_{k-1}|} \int_{t_{k-1}}^{t_k} f(t) \, dt.\\
      &\bullet~Y^{n,\varepsilon}_t:=X^{\varepsilon}_{\lceil nt\rceil/n}~\text{for $t\in(-1/n,1]$, where $\lceil\cdot\rceil$ is the ceiling function}.\\
      &\bullet~\|\sigma\|_{F}^2:=\tr(\sigma^T \sigma)=\sum_{ij} \sigma_{ij}^2,~\text{where $\sigma=(\sigma_{ij})$ is a $d\times r$ matrix}.
    \end{align}
  \end{notation}

  \begin{assumption}
    We will make the following assumptions:
    \begin{enumerate}[label={\rm (A\arabic*)}]
      \item \label{asmp: 2} The family $\{b(\cdot,\theta)\}_{\theta\in\Theta}$ is \textit{equi-Lipschitz continuous}, \textit{i.e.}, there is a postive constant $C$ called a \textit{common Lipschitz constant} such that
      \begin{equation}
        \left | b(x,\theta) - b(y,\theta) \right | \leq C \left | x-y \right | \quad (x,y\in\mathbb{R}^d,~\theta\in\Theta).
      \end{equation}
      \item \label{asmp: 4} The function $b$ belongs to $C^{\infty,0}(\mathbb{R}^d\times\Theta;\mathbb{R}^d)$,
      and $\left \| b \right \|_{C^{\infty,0}(B_M\times\Theta)}<\infty$ for all $M>0$.
      \item \label{asmp: 3} The function $b$ is differentiable with respect to $\theta\in\Theta_0$, and the families $\left \{ \partial_{\theta_j} b (\cdot,\theta) \right \}_{\theta\in\Theta_0}$ $(j=1,\dots,p)$ are equi-Lipschitz continuous.
      \item \label{asmp: 5} If $\theta\neq\theta_0$, then $b(x_t,\theta)\neq b(x_t,\theta_0)$ for some $t\in[0,1]$.
    \end{enumerate}
  \end{assumption}

\section{Convergence}
  \begin{proposition}\label{prop: Yt to Xt}
    Suppose the assumption \ref{asmp: 2}.
    \begin{enumerate}[label={\rm (\roman*)}]
      \item \label{3.1.i} It holds that
            \begin{equation}
              \sup_{\substack{\nu=0,\dots,\ell \\ t\in(t_{(\ell-1)\vee0},1]}} \left | Y_{t-t_\nu}^{n,\varepsilon} - x_t \right |
              \leq C \left ( \varepsilon \sup_{s\in[0,1]} \left | L_s \right | + \frac{\ell+1}{n} \right ),
            \end{equation}
            where $C$ is a positive constant, and $Y^{n,\varepsilon}_t:=X^{\varepsilon}_{\lceil nt\rceil/n}$ with the ceiling function $\lceil\cdot\rceil$.
      \item \label{3.1.ii} Let $\ell=\ell_n$ depend on $n$. If $\ell/n=O(1)$ as $n,\ell\to\infty$, then
            \begin{equation}
              \sup_{\substack{0<\varepsilon<1 \\ n\in\mathbb{N}}} \sup_{\substack{\nu=0,\dots,\ell \\ t\in(t_{(\ell-1)\vee0},1]}} \left | Y_{t-t_\nu}^{n,\varepsilon} \right | < \infty \quad \text{a.s.},
            \end{equation}
            and
            \begin{equation}
              \tau_m^{n,\varepsilon,\ell} := \inf \left \{ t>0 \, \Big | \, \left | x_t \right | \geq m,
              \min_{\nu=0,\dots,\ell} \left | Y_{t-t_\nu}^{n,\varepsilon} \right | \geq m \right \}
              \overset{a.s.}{\longrightarrow} \infty
            \end{equation}
            as $m\to\infty$, uniformly in $n$, $0<\varepsilon<1$ and $\ell\in\mathbb{N}_0$.
    \end{enumerate}
  \end{proposition}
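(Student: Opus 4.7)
The plan is to reduce both parts to the classical a priori bound for the small-noise SDE combined with the Lipschitz regularity of the limit ODE. Specifically, I would first establish
\[
\sup_{t\in[0,1]} |X^\varepsilon_t - x_t| \le C\varepsilon \sup_{s\in[0,1]}|L_s|
\]
by subtracting \eqref{eq: main ODE} from the integrated form of \eqref{eq: main SDE}, bounding the drift difference by $C|X^\varepsilon_u - x_u|$ via \ref{asmp: 2}, and closing with Gronwall's lemma. Writing $|b(y,\theta_0)|\le|b(x_0,\theta_0)|+C|y-x_0|$ together with a Gronwall bound on $|x_t|$ shows that $t\mapsto x_t$ is Lipschitz on $[0,1]$ with some constant $K$.

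For part \ref{3.1.i}, I would then fix $t\in(t_{(\ell-1)\vee 0},1]$ and $\nu\in\{0,\dots,\ell\}$, set $s_\nu := \lceil n(t-t_\nu)\rceil/n$ so that $Y^{n,\varepsilon}_{t-t_\nu}=X^\varepsilon_{s_\nu}$, and check
\[
|s_\nu - t| \le |s_\nu - (t-t_\nu)| + t_\nu \le 1/n + \ell/n = (\ell+1)/n,
\]
which holds whether $t-t_\nu$ is positive or in $(-1/n,0]$. The triangle inequality then gives
\[
|Y^{n,\varepsilon}_{t-t_\nu} - x_t| \le |X^\varepsilon_{s_\nu}-x_{s_\nu}| + |x_{s_\nu}-x_t| \le C\varepsilon\sup_{s\in[0,1]}|L_s| + K(\ell+1)/n,
\]
which is the claimed bound after absorbing constants.

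For part \ref{3.1.ii}, the uniform a.s.\ boundedness follows directly from part \ref{3.1.i}: $\varepsilon\sup_s|L_s|\le\sup_s|L_s|<\infty$ a.s.\ because the L\'evy process $L$ has c\`adl\`ag paths on $[0,1]$; $(\ell+1)/n$ stays bounded under the hypothesis $\ell/n=O(1)$; and $\sup_{t\in[0,1]}|x_t|$ is a deterministic finite constant. Hence there is a random $R(\omega)<\infty$ independent of $(n,\varepsilon,\ell,\nu,t)$ with $|Y^{n,\varepsilon}_{t-t_\nu}(\omega)|\le R(\omega)$. For $\omega$ outside a null set and $m>R(\omega)\vee\sup_{t\in[0,1]}|x_t|$, the defining set of $\tau^{n,\varepsilon,\ell}_m$ is empty on $[0,1]$, whence $\tau^{n,\varepsilon,\ell}_m(\omega)=\infty$, yielding the claimed uniform divergence as $m\to\infty$.

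I do not foresee a serious obstacle: the argument reduces to two applications of Gronwall's lemma and the displacement estimate $|s_\nu - t|\le(\ell+1)/n$. The one delicate item is the bookkeeping to verify that $t-t_\nu$ stays in the domain $(-1/n,1]$ of $Y^{n,\varepsilon}$, which is precisely what the restriction $t>t_{(\ell-1)\vee 0}$ in the supremum is designed to secure.
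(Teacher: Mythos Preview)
Your proposal is correct and follows essentially the same route as the paper: Gronwall for $\sup_{t\in[0,1]}|X^\varepsilon_t-x_t|$, the displacement bound $|\lceil n(t-t_\nu)\rceil/n - t|\le(\ell+1)/n$, and then the triangle inequality, with part~\ref{3.1.ii} read off from the resulting estimate. The only cosmetic difference is that you make the Lipschitz constant $K$ of $t\mapsto x_t$ explicit, whereas the paper writes the second term as the modulus of continuity $\sup_{|s-u|\le(\ell+1)/n}|x_s-x_u|$ and leaves its $O((\ell+1)/n)$ bound implicit.
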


  \begin{proof}
    It follows by Gronwall's inequality that
    \begin{equation}
      \sup_{t\in[0,1]} \left | X_t^{\varepsilon} - x_t \right |
      \leq e^{C} \varepsilon \sup_{t\in[0,1]} \left | L_t \right |,
    \end{equation}
    where $C$ is the common Lipschitz constant from \ref{asmp: 2}.
    Since $\left|\lceil n(t-t_\nu)\rceil/n-t\right|\leq\frac{\ell+1}{n}$ for all $t\in(t_{(\ell-1)\vee0},1]$,
    we have
    \begin{equation}
      \left | Y_{t-t_\nu}^{n,\varepsilon} - x_t \right |
      \leq e^C \varepsilon \sup_{s\in[0,1]} \left | L_s \right |
      + \sup_{\substack{\left | s-u \right | \leq (\ell+1)/n \\ s,u\in[0,1]}} \left | x_s - x_u \right |
    \end{equation}
    for all $t\in\left(t_{(\ell-1)\vee0},1\right]$.
    This implies \ref{3.1.i}.
    Moreover, \ref{3.1.ii} is immediate from the inequality
    \begin{equation}
      \left | Y_{t-t_\nu}^{n,\varepsilon} \right |
      \leq \sup_{s\in[0,1]} \left | x_s \right | + e^C \varepsilon \sup_{s\in[0,1]} \left | L_s \right |
      + \sup_{\substack{\left | s-u \right | \leq (\ell+1)/n \\ s,u\in[0,1]}} \left | x_s - x_u \right |
    \end{equation}
    for all $t\in\left(t_{(\ell-1)\vee0},1\right]$.
  \end{proof}

  \subsection{Inequalities for deterministic convergence}

    In this section, we prepare some inequalities for the solution of \eqref{eq: main ODE}.

    \begin{lemma}\label{lem: sup norm of f}
      Let $f$ be a function in $C^{\infty,0}(\mathbb{R}^d\times\Theta;\mathbb{R}^d)$ such that
      $\left \| f \right \|_{C^{\infty,0}(B_M\times\Theta)}< \infty$ for all $M>0$,
      and suppose the assumption \ref{asmp: 4}.
      Then,
      \begin{equation}
        \sup_{t\in[0,1]} \left | \frac{d^\ell}{dt^\ell} \left ( f(x_t,\theta) \right ) \right |
        \leq \ell! d^\ell \left \| b(\theta_0) \right \|_{C^\infty(B_M)}^\ell \left \| f \right \|_{C^{\infty,0}(B_M\times\Theta)}
      \end{equation}
      for all $\ell\in\mathbb{N}$.
    \end{lemma}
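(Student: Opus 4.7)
The plan is to reduce the problem to bounding iterated applications of the first-order transport operator $D := \sum_{i=1}^d b_i(\cdot, \theta_0) \partial_{x_i}$ acting on $f(\cdot, \theta)$, and then to expand fully via the Leibniz rule and count the resulting terms.

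First, since $x_t$ solves $\dot{x}_t = b(x_t, \theta_0)$, the chain rule yields $\frac{d}{dt} f(x_t, \theta) = (Df)(x_t, \theta)$ when $\theta$ is treated as a frozen parameter, and a short induction on $\ell$ upgrades this to $\frac{d^\ell}{dt^\ell} f(x_t, \theta) = (D^\ell f)(x_t, \theta)$; the regularity of $b$ from \ref{asmp: 4} together with the hypothesis on $f$ ensures that $D^\ell f$ still lies in $C^{\infty, 0}(\mathbb{R}^d \times \Theta; \mathbb{R}^d)$ at each induction step.

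Next, for each fixed $(i_1, \ldots, i_\ell) \in \{1,\ldots,d\}^\ell$, I would expand the nested product $b_{i_1} \partial_{i_1}(b_{i_2} \partial_{i_2}(\cdots b_{i_\ell} \partial_{i_\ell} f \cdots))$ by iteratively applying the Leibniz rule from the inside out. A short induction shows this produces exactly $\ell!$ terms, each a product of $\ell$ partial derivatives of the $b_{i_k}(\cdot, \theta_0)$ and one partial derivative of $f(\cdot, \theta)$: starting from the innermost $b_{i_\ell} \partial_{i_\ell} f$ (one term with two factors), every subsequent step ``apply $\partial_{i_k}$, then multiply by $b_{i_k}$'' multiplies the number of terms by the current factor count, so after $\ell$ iterations one obtains $1 \cdot 2 \cdots \ell = \ell!$ terms with $\ell + 1$ factors each.

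Finally, I would pick $M > 0$ so that $\{x_t : t \in [0,1]\} \subset B_M$, which exists by continuity of $t \mapsto x_t$ on $[0,1]$. On $B_M \times \Theta$, every $b$-factor is bounded by $\|b(\theta_0)\|_{C^\infty(B_M)}$ and the single $f$-factor by $\|f\|_{C^{\infty,0}(B_M \times \Theta)}$, so each of the $\ell!$ terms is at most $\|b(\theta_0)\|_{C^\infty(B_M)}^\ell \|f\|_{C^{\infty,0}(B_M \times \Theta)}$; summing over the $\ell!$ expansion terms and the $d^\ell$ index tuples $(i_1, \ldots, i_\ell)$ then gives the claimed bound.

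The main obstacle is the second step — verifying that the full Leibniz expansion indeed produces exactly $\ell!$ terms each with the right structure (one derivative on $f$, $\ell$ on the $b$'s). A more naive induction that bounds $\|Dg\|_{C^k}$ by $c_k \|b\|_{C^{k+1}} \|g\|_{C^{k+1}}$ would force the required derivative order of the intermediate $g = D^{\ell-1} f$ to grow without bound as $\ell$ grows, which is incompatible with a constant depending only on $\ell$ and $d$; the direct combinatorial expansion appears essential.
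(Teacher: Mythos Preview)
Your proposal is correct and is essentially the same argument as the paper's: both write $\frac{d^\ell}{dt^\ell} f(x_t,\theta)$ as a sum over $d^\ell$ index tuples and then show, by induction on the iterated Leibniz expansion, that for each tuple one obtains exactly $\ell!$ summands, each a product of $\ell$ (possibly zeroth-order) $x$-derivatives of the $b_{j_k}(\cdot,\theta_0)$ times one $x$-derivative of $f(\cdot,\theta)$. The paper records this as $\sum_{|\alpha|+|\nu|=\ell} c_{\alpha,\nu}=\ell!$ with the same term-by-term bound; your factor-counting recursion $1\cdot2\cdots\ell$ is exactly that induction spelled out.
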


    \begin{proof}
      It is shown by induction that
      \begin{equation}
        \frac{d^\ell}{dt^\ell} \left ( f(x_t, \theta) \right )
        = \sum_{j_1=1}^d \dots \sum_{j_\ell=1}^d \sum_{|\alpha|+|\nu|=\ell} c_{\alpha,\nu} \left ( D_x^{\alpha_1} b_{j_1} \cdots D_x^{\alpha_\ell} b_{j_\ell} D_x^\nu f_{\theta} \right )_{x=x_t}, \quad
        \sum_{|\alpha|+|\nu|=\ell} c_{\alpha,\nu} = \ell!,
      \end{equation}
      where $\alpha=(\alpha_1,\dots,\alpha_\ell)$ for $\alpha_j\in\mathbb{N}_0^d$, $\nu\in\mathbb{N}_0^d$, $c_{\alpha,\nu}\in\mathbb{N}_0$. We write $b_i(x,\theta_0)$ and $f(x,\theta)$ simply as $b_i$ and $f_{\theta}$, respectively.
      Indeed, the derivative of each term with respect to $t$ is
      \begin{equation}
        c_{\alpha,\nu} \sum_{j_{\ell+1}=1}^d \left ( b_{j_{\ell+1}} D_x^{e_{j_{\ell+1}}} \left ( D_x^{\alpha_1} b_{j_1} \cdots D_x^{\alpha_\ell} b_{j_\ell} D_x^\nu f_{\theta} \right ) \right )_{x=x_t},
      \end{equation}
      where $e_{j}$ denotes $d$-dimensional multi-index with entry 1 at the $j$th coordinate, and entry zero elsewhere.
    \end{proof}

    \begin{lemma}\label{lem: al f deterministic}
      Let $f$ be a function as in Lemma \ref{lem: sup norm of f}.
      Under the assumption \ref{asmp: 4}, it follows that
      \begin{equation}
        \left | A_{\ell} f(\pmb{x}_{t_k:t_{k-\ell}},\theta) - \dashint_{t_{k-1}}^{t_k} f(x_s, \theta) \, ds \right |
        \leq \ell!n^{-(\ell+1)} d^{\ell+1} \left \| b(\theta_0) \right \|_{C^{\infty}(B_M)}^{\ell+1} \left \| f \right \|_{C^{\infty,0}(B_M\times\Theta)},
      \end{equation}
      where $k=\ell\vee1,\dots,n$, and $M=\sup_{t\in[0,1]}|x_t|$.
    \end{lemma}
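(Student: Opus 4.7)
The plan is to recognize $A_\ell f(\pmb{x}_{t_k:t_{k-\ell}},\theta)$ as the integral of a Lagrange interpolating polynomial via identity \eqref{eq: polynomial}, so that the difference in question becomes the integrated Lagrange interpolation error. The interpolated function will be $g(s):=f(x_s,\theta)$, whose high derivatives are already controlled by Lemma \ref{lem: sup norm of f}. The estimate then assembles from three standard pieces: the quadrature-as-interpolation identity \eqref{eq: polynomial}, the classical Lagrange remainder, and Lemma \ref{lem: sup norm of f} applied at order $\ell+1$.

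First I would set $g(s):=f(x_s,\theta)$, which is smooth on $[0,1]$ by \ref{asmp: 4} and the smoothness of $x_t$. Applying \eqref{eq: polynomial} componentwise with the nodes $t_k,\dots,t_{k-\ell}$ (all contained in $[0,1]$ since $k\geq\ell$) rewrites
\begin{equation*}
A_\ell f(\pmb{x}_{t_k:t_{k-\ell}},\theta)
= n \int_{t_{k-1}}^{t_k} P(s;g,t_k,\dots,t_{k-\ell}) \, ds,
\end{equation*}
so that
\begin{equation*}
A_\ell f(\pmb{x}_{t_k:t_{k-\ell}},\theta) - \dashint_{t_{k-1}}^{t_k} f(x_s,\theta) \, ds
= n \int_{t_{k-1}}^{t_k} \bigl[P(s;g,t_k,\dots,t_{k-\ell}) - g(s)\bigr] \, ds.
\end{equation*}
Next, the standard Lagrange remainder gives
\begin{equation*}
|g(s) - P(s;g,t_k,\dots,t_{k-\ell})| \leq \frac{1}{(\ell+1)!}\sup_{t\in[0,1]}|g^{(\ell+1)}(t)| \prod_{\nu=0}^{\ell} |s-t_{k-\nu}|,
\end{equation*}
and Lemma \ref{lem: sup norm of f} at order $\ell+1$ bounds the sup-norm of $g^{(\ell+1)}$ by $(\ell+1)!\,d^{\ell+1}\|b(\theta_0)\|_{C^\infty(B_M)}^{\ell+1}\|f\|_{C^{\infty,0}(B_M\times\Theta)}$, which precisely cancels the factorial in the denominator.

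The final step is the elementary estimate $\prod_{\nu=0}^{\ell}|s-t_{k-\nu}| \leq \ell!/n^{\ell+1}$, valid for $s\in[t_{k-1},t_k]$ because $|s-t_k|,|s-t_{k-1}|\leq 1/n$ and $|s-t_{k-\nu}|\leq \nu/n$ for $\nu\geq 2$; then the length $t_k-t_{k-1}=1/n$ of the integration interval absorbs the outer factor of $n$, yielding exactly the claimed bound $\ell!\,n^{-(\ell+1)} d^{\ell+1}\|b(\theta_0)\|_{C^\infty(B_M)}^{\ell+1}\|f\|_{C^{\infty,0}(B_M\times\Theta)}$. I do not expect a genuine analytic obstacle here; the only care required is this factorial-versus-power-of-$n$ bookkeeping, together with checking that all interpolation nodes lie in $[0,1]$ so that Lemma \ref{lem: sup norm of f}'s supremum over $[0,1]$ applies.
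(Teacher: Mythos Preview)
Your proposal is correct and follows essentially the same route as the paper's proof: rewrite the difference via \eqref{eq: polynomial} as the averaged Lagrange interpolation error, apply the classical remainder formula (the paper cites Davis, Theorem 3.1.1, for this), bound $\sup_t|g^{(\ell+1)}(t)|$ by Lemma \ref{lem: sup norm of f} at order $\ell+1$, and use the elementary estimate $\prod_{\nu=0}^{\ell}|s-t_{k-\nu}|\leq \ell!/n^{\ell+1}$ for $s\in[t_{k-1},t_k]$. The only differences are cosmetic (the paper keeps the $\dashint$ notation rather than pulling out the factor $n$), and your bookkeeping of the product bound is exactly what the paper leaves implicit.
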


    \begin{remark}
      When we employ $\tilde{A}_{\ell}f(\pmb{x}_{t_{k-1}:t_{k-\ell}},\theta)$ given by \eqref{eq: adams-bashforth sum} with \eqref{eq: def of adams-bashforth} as the version of the Adams-Bashforth method instead of $A_{\ell}f(\pmb{x}_{t_{k}:t_{k-\ell}},\theta)$,
      we obtain the following inequality:
      \begin{equation}
        \left | \tilde{A}_{\ell} f(\pmb{x}_{t_{k-1}:t_{k-\ell}},\theta) - \dashint_{t_{k-1}}^{t_k}f(x_s, \theta) \, ds \right |
        \leq \ell!n^{-\ell} d^{\ell} \left \| b(\theta_0) \right \|_{C^{\infty}(B_M)}^{\ell} \left \| f \right \|_{C^{\infty,0}(B_M\times\Theta)}.
      \end{equation}
    \end{remark}

    \begin{proof}
      It follows from \eqref{eq: polynomial} and \eqref{eq: adams-moulton sum} that
      \begin{equation}
        \left | A_{\ell} f(\pmb{x}_{t_k:t_{k-\ell}},\theta) - \dashint_{t_{k-1}}^{t_k}f(x_s, \theta) \, ds \right |
        = \left | \dashint_{t_{k-1}}^{t_k} \left ( P(s;f(x_\cdot,\theta),t_k,\dots,t_{k-\ell})  - f(x_s, \theta) \right ) \, ds \right |,
      \end{equation}
      where $k=\ell\vee1,\dots,n$, and $s\mapsto P(s;f(x_\cdot,\theta),t_k,\dots,t_{k-\ell})$ is the Lagrange interpolating polynomial through the points $(s,f(x_s,\theta))$, $s=t_k,\dots,t_{k-\ell}$.
      It holds from Theorem 3.1.1 in Davis \cite{Davis1975} that for each $s\in[t_{k-1},t_k]$ there exists $\xi_s\in(t_{k-\ell},t_k)$ such that
      \begin{equation}
        P(s;f(x_\cdot,\theta),t_k,\dots,t_{k-\ell})-f(x_s,\theta)
        = \frac{1}{(\ell+1)!} \left ( \frac{d^{\ell+1}}{dt^{\ell+1}} \left ( f(x_t,\theta) \right ) \right )_{t=\xi_s} \prod_{\nu=0}^{\ell} (s-t_{k-\nu}),
      \end{equation}
      and that
      \begin{equation}
        \begin{aligned}
          \dashint_{t_{k-1}}^{t_k} \left | \left ( \frac{d^{\ell+1}}{dt^{\ell+1}} \left ( f(x_t,\theta) \right ) \right )_{t=\xi_s} \prod_{\nu=0}^{\ell} (s-t_{k-\nu}) \right | \, ds
          &\leq \sup_{t\in[t_{k-\ell},t_k]} \left | \frac{d^{\ell+1}}{dt^{\ell+1}} \left ( f(x_t,\theta) \right ) \right | \dashint_{t_{k-1}}^{t_k} \prod_{\nu=0}^{\ell} \left | s-t_{k-\nu} \right | \, ds\\
          &\leq \frac{\ell!}{n^(\ell+1)} \sup_{t\in[0,1]} \left | \frac{d^{\ell+1}}{dt^{\ell+1}} \left ( f(x_t,\theta) \right ) \right |.
        \end{aligned}
      \end{equation}
      This yields the consequence.
    \end{proof}

  \subsection{Convergence theorems}

    \begin{proposition}\label{prop: a.s. conv sum | Al f(X) |^q}
      Let $f$ be a function as in Lemma \ref{lem: sup norm of f}.
      Suppose the assumptions \ref{asmp: 2} and \ref{asmp: 4},
      and that the family $\{f(\cdot,\theta)\}_{\theta\in\Theta}$ is equi-Lipschitz continuous.
      If $\ell/n\to0$ and $2^\ell\varepsilon\to0$ as $n\to\infty$ and $\varepsilon\to0$, then for all $q\geq1$
      \begin{equation}
        \frac{1}{n} \sum_{k=\ell\vee1}^n \left | A_\ell f(\pmb{X}_{t_k:t_{k-\ell}}^{\varepsilon}, \theta) \right |^q
        \overset{a.s.}{\longrightarrow}
        \int_0^1 \left | f(x_t,\theta) \right |^q \, dt
      \end{equation}
      as $n\to\infty$ and $\varepsilon\to0$, uniformly in $\theta\in\Theta$.
    \end{proposition}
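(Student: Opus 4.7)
The plan is to show that, pointwise in $k$ and $\theta$ and a.s., $A_\ell f(\pmb{X}_{t_k:t_{k-\ell}}^\varepsilon,\theta)$ tends to $f(x_{t_k},\theta)$, so that $\frac{1}{n}\sum_k |A_\ell f|^q$ is close to the Riemann sum $\frac{1}{n}\sum_k |f(x_{t_k},\theta)|^q$, and the latter converges to $\int_0^1 |f(x_t,\theta)|^q\,dt$ in the standard way. First I would apply Proposition~\ref{prop: Yt to Xt}\ref{3.1.ii} to fix a random but a.s.\ finite radius $M$ such that every $x_{t_{k-\nu}}$ and every $X^\varepsilon_{t_{k-\nu}}$ lies in $B_M$ simultaneously for all $n$, $\varepsilon\in(0,1)$, $\ell$ and $\nu$; on $B_M\times\Theta$ the hypotheses \ref{asmp: 4} and those on $f$ yield uniform bounds on $f$, on $b(\cdot,\theta_0)$ and on their derivatives, as well as an equi-Lipschitz constant for $f$ that is uniform in $\theta$.

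Next, I would decompose
\[
  A_\ell f(\pmb{X}_{t_k:t_{k-\ell}}^\varepsilon,\theta) - f(x_{t_k},\theta) = I_1 + I_2 + I_3,
\]
where $I_1 = A_\ell f(\pmb{X}_{t_k:t_{k-\ell}}^\varepsilon,\theta) - A_\ell f(\pmb{x}_{t_k:t_{k-\ell}},\theta)$, $I_2 = A_\ell f(\pmb{x}_{t_k:t_{k-\ell}},\theta) - \dashint_{t_{k-1}}^{t_k} f(x_s,\theta)\,ds$, and $I_3 = \dashint_{t_{k-1}}^{t_k} f(x_s,\theta)\,ds - f(x_{t_k},\theta)$. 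Using the equi-Lipschitz continuity of $f$ together with the Gronwall bound $\sup_t|X^\varepsilon_t-x_t|\leq e^C\varepsilon\sup|L|$ from the proof of Proposition~\ref{prop: Yt to Xt}, one gets $|I_1| \leq C \bigl(\sum_{\nu=0}^\ell |\beta_{\ell\nu}|\bigr)\,\varepsilon\sup_{s\in[0,1]}|L_s|$. Lemma~\ref{lem: al f deterministic} directly controls $|I_2| \leq \ell!\,d^{\ell+1}\|b(\theta_0)\|_{C^\infty(B_M)}^{\ell+1}\|f\|_{C^{\infty,0}(B_M\times\Theta)}/n^{\ell+1}$, and $|I_3| \leq C/n$ follows from the Lipschitz property of $f$ combined with that of $x_\cdot$ (since $b$ is bounded on $B_M$).

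The main obstacle is a combinatorial bound $\sum_{\nu=0}^\ell |\beta_{\ell\nu}| \leq 2^\ell$, which is exactly what links the Adams--Moulton weights to the hypothesis $2^\ell\varepsilon\to0$. I would estimate the Lagrange integrand by $\prod_{j=0,\,j\neq\nu}^\ell |u+j-1| \leq \ell!/\max(\nu,1)$ for $u\in[0,1]$, so that $|\beta_{\ell\nu}| \leq \binom{\ell}{\nu}/\max(\nu,1) \leq \binom{\ell}{\nu}$, and summing yields $\sum_\nu |\beta_{\ell\nu}| \leq 2^\ell$. Combined with $2^\ell\varepsilon\to0$, this forces $\sup_k|I_1|\to0$ a.s., while Stirling's formula applied to $\ell!/n^{\ell+1}$ shows that the Lemma~\ref{lem: al f deterministic} bound on $I_2$ tends to $0$ uniformly in $k$ under $\ell/n\to0$.

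Finally, $\frac{1}{n}\sum_{k=\ell\vee1}^n |f(x_{t_k},\theta)|^q \to \int_0^1|f(x_t,\theta)|^q\,dt$ by the standard Riemann-sum convergence, and the $\ell$ omitted initial indices contribute at most $O(\ell/n)\to 0$. Passing from $|A_\ell f|^q$ to $|f(x_{t_k})|^q$ uses the elementary inequality $\bigl||a|^q-|b|^q\bigr|\leq qK^{q-1}|a-b|$ with $K$ the uniform bound from step~1. Uniformity in $\theta$ holds throughout because every bound above depends on $\theta$ only through the equi-Lipschitz constant and the $\sup_\Theta$-norms on $B_M\times\Theta$, and the Riemann-sum limit is uniform in $\theta$ by joint continuity of $|f|^q$ on the compact set $B_M\times\Theta$.
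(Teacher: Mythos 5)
Your argument is essentially the paper's: the same three-term decomposition (stochastic error via the Gronwall bound $\sup_t|X_t^\varepsilon-x_t|\le e^C\varepsilon\sup_s|L_s|$ and the weight bound $\sum_\nu|\beta_{\ell\nu}|\le 2^\ell$ of Lemma \ref{lem: L1-ineq for adams coef}, the interpolation error via Lemma \ref{lem: al f deterministic}, and a deterministic quadrature limit), with the only cosmetic difference that you compare to $f(x_{t_k},\theta)$ and invoke Riemann sums where the paper compares to $\dashint_{t_{k-1}}^{t_k}f(x_s,\theta)\,ds$ and invokes Lemma \ref{lem: ave int conv}.

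One step needs to be made precise. The paper applies the triangle (Minkowski) inequality to the quantities $\bigl(\tfrac1n\sum_k|\cdot|^q\bigr)^{1/q}$ and only at the very end uses continuity of $y\mapsto y^q$, so it never needs a pointwise bound on $|A_\ell f(\pmb{X}^\varepsilon_{t_k:t_{k-\ell}},\theta)|$. You instead pass from $|A_\ell f|^q$ to $|f(x_{t_k},\theta)|^q$ via $\bigl||a|^q-|b|^q\bigr|\le qK^{q-1}|a-b|$ ``with $K$ the uniform bound from step~1.'' But step~1 only bounds $f$ on $B_M\times\Theta$; since the Adams--Moulton weights are signed, the direct bound is $|A_\ell f(\pmb{X}^\varepsilon_{t_k:t_{k-\ell}},\theta)|\le 2^\ell\|f\|_{C(B_M\times\Theta)}$, and taking $K\sim 2^\ell$ would turn your error term into something of order $2^{\ell(q-1)}\cdot 2^\ell\varepsilon=2^{q\ell}\varepsilon$, which the hypothesis $2^\ell\varepsilon\to0$ does not control for $q>1$. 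The fix is immediate from what you have already proved: since $\sup_{k,\theta}|A_\ell f(\pmb{X}^\varepsilon_{t_k:t_{k-\ell}},\theta)-f(x_{t_k},\theta)|=\sup|I_1+I_2+I_3|\to0$ a.s., eventually $|A_\ell f|\le \|f\|_{C(B_M\times\Theta)}+1$ and one may take $K=\|f\|_{C(B_M\times\Theta)}+1$ --- but this eventual-boundedness observation (or a switch to the paper's Minkowski formulation) must be stated, as written your constant $K$ is not justified. With that repair the proof is correct.
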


    \begin{proof}
      We use the triangle inequality to obtain that
      \begin{multline}
        \left | \left ( \frac{1}{n} \sum_{k=\ell\vee1}^n \left | A_\ell f(\pmb{X}_{t_k:t_{k-\ell}}^{\varepsilon}, \theta) \right |^q \right )^{1/q}
        - \left ( \int_0^1 \left | f(x_t,\theta) \right |^q \, dt \right )^{1/q} \right | \\
        \leq
        \left ( \frac{1}{n} \sum_{k=\ell\vee1}^n \left | A_\ell f(\pmb{X}_{t_k:t_{k-\ell}}^{\varepsilon}, \theta) - A_\ell f(\pmb{x}_{t_k:t_{k-\ell}}, \theta) \right |^q \right )^{1/q} \\
         + \left ( \frac{1}{n} \sum_{k=\ell\vee1}^n \left | A_\ell f(\pmb{x}_{t_k:t_{k-\ell}}, \theta) - \dashint_{t_{k-1}}^{t_k} f(x_t,\theta) \, dt \right |^q \right )^{1/q} \\
         + \left | \left ( \frac{1}{n} \sum_{k=\ell\vee1}^n \left | \dashint_{t_{k-1}}^{t_k} f(x_t,\theta) \, dt \right |^q \right )^{1/q} - \left ( \int_0^1 \left | f(x_t,\theta) \right |^q \, dt \right )^{1/q} \right |.
      \end{multline}
      The second and the third term in the right-hand side converge to zero as $n\to\infty$ and $\ell/n\to0$, uniformly in $\theta\in\Theta$, by Lemma \ref{lem: al f deterministic} and Lemma \ref{lem: ave int conv}.
      From Lemma \ref{lem: L1-ineq for adams coef}, the first term is estimated from above by
      \begin{equation}
        \left ( \sum_{\nu=0}^{\ell} \left | \beta_{\ell\nu} \right | \right )
        \sup_{s\in[0,1]} \left | f(X_s^\varepsilon,\theta) - f(x_s,\theta) \right |
        \leq C 2^\ell
        \sup_{s\in[0,1]} \left | X_s^\varepsilon - x_s \right |,
      \end{equation}
      where $C$ is the common Lipschitz constant for $f$.
      This converges almost surely to zero as $2^\ell\varepsilon\to0$, uniformly in $\theta\in\Theta$, as we saw in the proof of Proposition \ref{prop: Yt to Xt}.
    \end{proof}

    \begin{remark}
      If we employ $\tilde{A}_{\ell}f(\pmb{X}_{t_{k-1}:t_{k-\ell}}^\varepsilon,\theta)$ instead of $A_{\ell}f(\pmb{x}_{t_{k}:t_{k-\ell}}^\varepsilon,\theta)$,
      the convergence in Proposition \ref{prop: a.s. conv sum | Al f(X) |^q} holds under $\ell2^{\ell}\varepsilon\to0$.
    \end{remark}

    \begin{remark}
      It is easy to check that, for $f$ and $g$ satisfying the same assumptions as in Proposition \ref{prop: a.s. conv sum | Al f(X) |^q},
      \begin{equation}
        \frac{1}{n} \sum_{k=\ell\vee1}^n A_\ell f(\pmb{X}_{t_k:t_{k-\ell}}^{\varepsilon}, \theta) \cdot A_\ell g(\pmb{X}_{t_k:t_{k-\ell}}^{\varepsilon}, \theta)
        \overset{a.s.}{\longrightarrow}
        \int_0^1 f(x_t,\theta) \cdot g(x_t,\theta) \, dt.
      \end{equation}
      This convergence will appear in the proof of Proposition \ref{prop: 3.3} (ii).
      We can also say that
      \begin{equation}
        \frac{1}{n} \sum_{k=\ell\vee1}^n A_\ell f(\pmb{X}_{t_k:t_{k-\ell}}^{\varepsilon}, \theta)
        \overset{a.s.}{\longrightarrow}
        \int_0^1 f(x_t,\theta) \, dt,
      \end{equation}
      though we will not need this in the paper.
    \end{remark}

    \begin{lemma}\label{lem: conv 4}
      Let $f$ be a function as in Proposition \ref{prop: a.s. conv sum | Al f(X) |^q}.
      Suppose the assumptions \ref{asmp: 2} and \ref{asmp: 4},
      and that %$\partial_{x_j} f$ are bounded on $\mathbb{R}^d\times\Theta$.
      $f$ is differentiable with respect to $\theta\in\Theta_0$,
      and the families $\left \{ \partial_{\theta_j}f(\cdot,\theta) \right \}_{\theta\in\Theta_0}$ $(j=1,\dots,p)$ are equi-Lipschitz continuous.
      If $\ell2^\ell/n\to0$ and $2^\ell\varepsilon\to0$ as $n\to\infty$ and $\varepsilon\to0$,
      then it holds that
      \begin{equation}
        \sum_{k=\ell\vee1}^n A_\ell f(\pmb{X}_{t_k:t_{k-\ell}}^{\varepsilon}, \theta) \cdot \left ( L_{t_k} - L_{t_{k-1}} \right )
        \overset{P_{\theta_0}}{\longrightarrow}
        \int_0^1 f(x_t, \theta) \cdot dL_t
      \end{equation}
      as $n\to\infty$ and $\varepsilon\to0$, uniformly in $\theta\in\Theta$.
    \end{lemma}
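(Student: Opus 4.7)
The plan is to insert the deterministic trajectory as an intermediate step and write
\begin{equation*}
  \sum_{k=\ell\vee1}^{n}A_\ell f(\pmb{X}^{\varepsilon}_{t_k:t_{k-\ell}},\theta)\cdot(L_{t_k}-L_{t_{k-1}})-\int_0^1 f(x_t,\theta)\cdot dL_t = D_1 + D_2,
\end{equation*}
where $D_1:=\sum_{k}[A_\ell f(\pmb{X}^{\varepsilon}_{t_k:t_{k-\ell}},\theta)-A_\ell f(\pmb{x}_{t_k:t_{k-\ell}},\theta)]\cdot(L_{t_k}-L_{t_{k-1}})$ is the random-to-deterministic error and $D_2:=\sum_{k}A_\ell f(\pmb{x}_{t_k:t_{k-\ell}},\theta)\cdot(L_{t_k}-L_{t_{k-1}})-\int_0^1 f(x_t,\theta)\cdot dL_t$ is the Adams-quadrature-to-continuous-integral error. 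Because $\int_{|z|>0}|z|\,\nu(dz)<\infty$, the process $L$ admits the representation $L_t=ct+\sigma B_t+J_t$ with $J_t:=\int_0^t\!\int z\,N(ds,dz)$ of finite variation a.s.; I will treat the Brownian component of $L$ by a martingale argument and the drift and $J$ components pathwise.

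For $D_2$, Lemma~\ref{lem: al f deterministic} gives $A_\ell f(\pmb{x}_{t_k:t_{k-\ell}},\theta)=\dashint_{t_{k-1}}^{t_k}f(x_s,\theta)\,ds+r_k$ with $|r_k|\leq C\ell!\,n^{-(\ell+1)}$, uniformly in $\theta$. Since $E\sum_k|L_{t_k}-L_{t_{k-1}}|=O(\sqrt{n})$ (from the Brownian part), the $r_k$ contribution is $O_P(\ell!\,n^{-(\ell+1/2)})\to 0$ by Stirling under $\ell/n\to 0$. The principal piece may be rewritten as $\sum_{k}\int_{t_{k-1}}^{t_k}[\bar f_k-f(x_s,\theta)]\,dL_s$ with $\bar f_k:=\dashint_{t_{k-1}}^{t_k}f(x_s,\theta)\,ds$: the drift integral vanishes identically by the defining average property of $\bar f_k$; the Brownian piece is a sum of orthogonal martingale increments with $L^2$ norm bounded by $\|\sigma\|_F^2\sum_k(1/n)\sup_{s\in[t_{k-1},t_k]}|\bar f_k-f(x_s,\theta)|^2=O(n^{-2})$ via the local oscillation $O(1/n)$ of $t\mapsto f(x_t,\theta)$; and the $J$-piece is bounded pathwise by $\sup_{k,s}|\bar f_k-f(x_s,\theta)|\cdot\int_0^1\!\int|z|\,N(ds,dz)=O_P(n^{-1})$.

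For $D_1$, the equi-Lipschitz hypothesis, the coefficient bound $\sum_{\nu=0}^{\ell}|\beta_{\ell\nu}|\leq C\,2^\ell$ (as used in the proof of Proposition~\ref{prop: a.s. conv sum | Al f(X) |^q}), and Gronwall's inequality yield
\begin{equation*}
  \bigl|A_\ell f(\pmb{X}^\varepsilon_{t_k:t_{k-\ell}},\theta)-A_\ell f(\pmb{x}_{t_k:t_{k-\ell}},\theta)\bigr|\leq C\,2^\ell\,\varepsilon\sup_{s\in[0,1]}|L_s|,
\end{equation*}
uniformly in $k$ and $\theta$, so the contribution against the finite-variation components of $L$ is $O_P(2^\ell\varepsilon)\to 0$. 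For the Brownian component, decompose the difference as $\beta_{\ell 0}[f(X^\varepsilon_{t_k})-f(x_{t_k})]+\sum_{\nu=1}^{\ell}\beta_{\ell\nu}[f(X^\varepsilon_{t_{k-\nu}})-f(x_{t_{k-\nu}})]$. The $\nu\geq 1$ part has an $\mathcal{F}_{t_{k-1}}$-measurable coefficient, so $\sum_k(\cdot)\sigma\Delta B_k$ is a martingale transform whose It\^{o}-isometry bound is of order $\|\sigma\|_F^2\cdot 4^\ell\varepsilon^2\,E[\sup|L|^2]\to 0$ under $2^\ell\varepsilon\to 0$. The $\nu=0$ piece is \emph{predictablised} via $f(X^\varepsilon_{t_k})-f(x_{t_k})=[f(X^\varepsilon_{t_{k-1}})-f(x_{t_{k-1}})]+\delta_k$: the first bracket is predictable and handled as above, while $|\delta_k|\leq C/n+C\varepsilon|L_{t_k}-L_{t_{k-1}}|$ follows from the SDE/ODE increments, so $\sum_k|\delta_k||\Delta B_k|=O_P(n^{-1/2})+O_P(\varepsilon)\to 0$ by Cauchy--Schwarz against $\sum|\Delta L_k|^2=O_P(1)$ and $\sum|\Delta B_k|^2=O_P(1)$.

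The main obstacle is precisely the implicit character of the Adams--Moulton operator $A_\ell$: because $f(X^\varepsilon_{t_k})$ is not $\mathcal{F}_{t_{k-1}}$-measurable, the Brownian-driven sum is not a martingale transform, and the predictablization trick above is essential to recover the martingale structure used in the It\^{o}-isometry bound. A secondary concern is the bookkeeping of the combinatorial factors $2^\ell$, $4^\ell$, and $\ell!$ arising from the Adams weights and from the Lagrange quadrature error; each must be absorbed by the hypotheses $2^\ell\varepsilon\to 0$ and $\ell 2^\ell/n\to 0$. All bounds above are uniform in $\theta$ thanks to the equi-Lipschitz and $C^{\infty,0}$-boundedness assumptions, giving the claimed uniformity in the conclusion.
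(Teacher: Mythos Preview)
Your decomposition $D_1+D_2$ through the deterministic Adams step is a legitimate alternative to the paper's route (which rewrites the sum as $\sum_\nu\beta_{\ell\nu}\int f(Y^{n,\varepsilon}_{t-t_\nu},\theta)\,dL_t$ and compares $f(Y^{n,\varepsilon}_{t-t_\nu})$ directly to $f(x_t)$), and your pathwise treatment of the drift and finite-variation jump parts is fine. The genuine gap is the passage to \emph{uniformity in $\theta$} for the Brownian-driven sums. Your It\^o-isometry bounds---for the $\nu\geq1$ predictable part of $D_1$, for the predictablised $\nu=0$ piece, and for the principal Brownian part of $D_2$---control $\sup_{\theta}E[|G_n(\theta)|^2]$, not $E[\sup_{\theta}|G_n(\theta)|]$; a pointwise-in-$\theta$ $L^2$ bound, even with a constant that is uniform in $\theta$, does not by itself give $\sup_{\theta\in\Theta}|G_n(\theta)|\to0$ in probability. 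The tell is that your argument never uses the hypothesis that $\{\partial_{\theta_j}f(\cdot,\theta)\}_{\theta}$ is equi-Lipschitz.

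The paper closes exactly this gap: after localising with the stopping time $\tau_m^{n,\varepsilon,\ell}$, it invokes Morrey's inequality $\|\cdot\|_{C(\bar\Theta)}\leq C\|\cdot\|_{W^{1,q}(\Theta)}$ for $q>p$, so that one must bound in $L^q$ both the stochastic integral and its $\theta$-derivative $\int \sum_\nu\beta_{\ell\nu}[\partial_{\theta_j}f(Y^{n,\varepsilon}_{t-t_\nu},\theta)-\partial_{\theta_j}f(x_t,\theta)]\,d\tilde L_t$; the equi-Lipschitz assumption on $\partial_{\theta_j}f$ is precisely what makes the latter small. To repair your argument you would need the same device (or an equivalent chaining/Kolmogorov-type continuity estimate in $\theta$). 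Two smaller points: your rewriting of $D_2$ drops the harmless boundary term $-\int_0^{t_{(\ell-1)\vee0}}f(x_t,\theta)\,dL_t$, and bounds such as $|\delta_k|\leq C/n+C\varepsilon|\Delta L_k|$ rely on $b(X^\varepsilon_\cdot,\theta_0)$ being bounded, which holds only with an $\omega$-dependent constant; the paper handles this via the localisation $\{t\leq\tau_m^{n,\varepsilon,\ell}\}$ before taking moments.
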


    \begin{proof}
      Since
      \begin{equation}
        \sum_{k=\ell\vee1}^n A_\ell f(\pmb{X}_{t_k:t_{k-\ell}}^{\varepsilon}, \theta) \cdot \left ( L_{t_k} - L_{t_{k-1}} \right )
        = \sum_{\nu=0}^{\ell} \beta_{\ell\nu} \int_{t_{(\ell-1)\vee0}}^1 f ( Y_{t-t_{\nu}}^{n,\varepsilon}, \theta ) \cdot dL_t
      \end{equation}
      and $\sum_{\nu=0}^{\ell} \beta_{\ell\nu} = 1$, we have
      \begin{multline}
        \sum_{k=\ell\vee1}^n A_\ell f(\pmb{X}_{t_k:t_{k-\ell}}^{\varepsilon}, \theta) \cdot \left ( L_{t_k} - L_{t_{k-1}} \right )
        - \int_0^1 f(x_t, \theta) \cdot dL_t \\
        = \sum_{\nu=0}^{\ell} \beta_{\ell\nu} \int_{t_{(\ell-1)\vee0}}^1 \left ( f ( Y_{t-t_{\nu}}^{n,\varepsilon}, \theta ) - f(x_t, \theta) \right ) \cdot dL_t
        - \int_0^{t_{(\ell-1)\vee0}} f(x_t, \theta) \cdot dL_t.
      \end{multline}
      The last term converges almost surely to zero as $n\to\infty$ and $\ell/n\to0$, uniformly in $\theta\in\Theta$.
      Let us denote
      \begin{equation}
        \tilde{L}_t = \sigma B_t + \int_0^t \int_{|z|\leq1} z \, \tilde{N}(ds,dz),
      \end{equation}
      then $L_t = a t + \tilde{L}_t + \int_0^t \int_{|z|>1} z \, N(ds,dz)$.
      We have
      \begin{multline}
        \left | \sum_{\nu=0}^{\ell} \beta_{\ell\nu} \int_{t_{(\ell-1)\vee0}}^1 \int_{|z|>1} \left ( f ( Y_{t-t_{\nu}}^{n,\varepsilon}, \theta ) - f(x_t, \theta) \right ) \cdot z \, N(dt,dz) \right | \\
        \leq \left ( \sum_{\nu=0}^{\ell} \left | \beta_{\ell\nu} \right | \right ) \sup_{\nu=0,\dots,\ell} \int_{t_{(\ell-1)\vee0}}^1 \int_{|z|>1} \left | f ( Y_{t-t_{\nu}}^{n,\varepsilon}, \theta ) - f(x_t, \theta) \right | |z| \, N(dt,dz) \\
        \leq C 2^\ell \sup_{\substack{\nu=0,\dots,\ell \\ t\in[t_{(\ell-1)\vee0},1]}} \left | Y_{t-t_{\nu}}^{n,\varepsilon} - x_t \right | \int_{0}^1 \int_{|z|>1} |z| \, N(dt,dz),
      \end{multline}
      which converges almost surely to zero as $n\to\infty$, $\varepsilon\to0$, $\ell2^\ell/n\to0$ and $2^\ell\varepsilon\to0$, uniformly in $\theta\in\Theta$, by Proposition \ref{prop: Yt to Xt}.
      Analogously, we obtain
      \begin{equation}
        \left | \sum_{\nu=0}^{\ell} \beta_{\ell\nu} \int_{t_{(\ell-1)\vee0}}^1 \left ( f ( Y_{t-t_{\nu}}^{n,\varepsilon}, \theta ) - f(x_t, \theta) \right ) \cdot a \, dt \right |
        \overset{a.s.}{\longrightarrow} 0
      \end{equation}
      as $n\to\infty$, $\varepsilon\to0$, $\ell2^\ell/n\to0$ and $2^\ell\varepsilon\to0$, uniformly in $\theta\in\Theta$.

      Analogous to the proof of Lemma 4 in Ogihara and Yoshida \cite{ogihara2011quasi}, it follows from Markov's inequality and Morrey's inequality (see, \textit{e.g.}, Theorem 5 in Evans \cite[Section 5.6]{Evans2010partial}) that for any $q\in(p,\infty]$ and $\eta>0$
      \begin{multline}\label{eq: P(sup |int dLt| > eta)}
        P \left ( \sup_{\theta\in\Theta} \left | \sum_{\nu=0}^{\ell} \beta_{\ell\nu} \int_{t_{(\ell-1)\vee0}}^1 \pmb{1}_{\{t\leq\tau_m^{n,\varepsilon,\ell}\}} \left ( f ( Y_{t-t_{\nu}}^{n,\varepsilon}, \theta ) - f(x_t, \theta) \right )  \cdot d\tilde{L}_t \right | > \eta \right ) \\
        \leq \frac{1}{\eta} E \left [ \sup_{\theta\in\Theta} \left | \int_{t_{(\ell-1)\vee0}}^1 \pmb{1}_{\{t\leq\tau_m^{n,\varepsilon,\ell}\}} \sum_{\nu=0}^{\ell} \beta_{\ell\nu} \left ( f ( Y_{t-t_{\nu}}^{n,\varepsilon}, \theta ) - f(x_t, \theta) \right ) \cdot d\tilde{L}_t \right | \right ] \\
        \leq \frac{C}{\eta} E \left [ \left \| \int_{t_{(\ell-1)\vee0}}^1 \pmb{1}_{\{t\leq\tau_m^{n,\varepsilon,\ell}\}} \sum_{\nu=0}^{\ell} \beta_{\ell\nu} \left ( f(Y_{t-t_{\nu}}^{n,\varepsilon}, \cdot\,) - f(x_t, \cdot\,) \right ) \cdot d\tilde{L}_t \right \|_{W^{1,q}(\Theta)} \right ],
      \end{multline}
      where $C$ is a constant depending only on $p,q$ and $\Theta$.
      It follows from H\"{o}lder's inequality and Fubini's theorem that
      \begin{multline}\label{eq: E int dLt to 0 (1)}
        P \left ( \sup_{\theta\in\Theta} \left | \sum_{\nu=0}^{\ell} \beta_{\ell\nu} \int_{t_{(\ell-1)\vee0}}^1 \pmb{1}_{\{t\leq\tau_m^{n,\varepsilon,\ell}\}} \left ( f ( Y_{t-t_{\nu}}^{n,\varepsilon}, \theta ) - f(x_t, \theta) \right )  \cdot d\tilde{L}_t \right | > \eta \right ) \\
        \leq \frac{C}{\eta} \left ( \int_\Theta E \left [ \left | \int_{t_{(\ell-1)\vee0}}^1 \pmb{1}_{\{t\leq\tau_m^{n,\varepsilon,\ell}\}} \sum_{\nu=0}^{\ell} \beta_{\ell\nu} \left ( f(Y_{t-t_{\nu}}^{n,\varepsilon}, \theta) - f(x_t, \theta) \right ) \cdot d\tilde{L}_t \right |^q \right ] d\theta \right )^{1/q} \\
        +\frac{C}{\eta} \left ( \int_\Theta E \left [ \left | \int_{t_{(\ell-1)\vee0}}^1 \pmb{1}_{\{t\leq\tau_m^{n,\varepsilon,\ell}\}} \sum_{\nu=0}^{\ell} \beta_{\ell\nu} \left ( \partial_{\theta_j} f (Y_{t-t_{\nu}}^{n,\varepsilon}, \theta) - \partial_{\theta_j} f (x_t, \theta) \right ) \cdot d\tilde{L}_t \right |^q \right ] d\theta \right )^{1/q}
      \end{multline}
      for $j=1,\dots,p$.
      By the moment inequality for stochastic integrals (see, \textit{e.g.}, Theorem 7.1 in Chapter 1 in Mao \cite{mao2008stochastic}), for $q\geq2$ we obtain
      \begin{multline}
        \int_\Theta E \left [ \left | \int_{t_{(\ell-1)\vee0}}^1 \pmb{1}_{\{t\leq\tau_m^{n,\varepsilon,\ell}\}} \sum_{\nu=0}^{\ell} \beta_{\ell\nu} \left ( f(Y_{t-t_{\nu}}^{n,\varepsilon}, \theta) - f(x_t, \theta) \right ) \cdot dB_t \right |^q \right ] d\theta \\
        \leq \left ( \frac{q(q-1)}{2} \right )^{q/2} \int_\Theta E \left [ \int_{t_{(\ell-1)\vee0}}^1 \pmb{1}_{\{t\leq\tau_m^{n,\varepsilon,\ell}\}} \left | \sum_{\nu=0}^{\ell} \beta_{\ell\nu} \left ( f(Y_{t-t_{\nu}}^{n,\varepsilon}, \theta) - f(x_t, \theta) \right ) \right |^q dt \right ] d\theta \\
        \leq \left ( \frac{q(q-1)}{2} \right )^{q/2} C^q \int_\Theta E \left [ \int_{t_{(\ell-1)\vee0}}^1 \pmb{1}_{\{t\leq\tau_m^{n,\varepsilon,\ell}\}} 2^{q\ell} \sup_{\nu=0,\dots,\ell} \left | Y_{t-t_{\nu}}^{n,\varepsilon} - x_t \right |^q dt \right ] d\theta,
      \end{multline}
      and by Kunita's inequality (see, \textit{e.g.}, Theorem 4.4.23 in Applebaum \cite{applebaum2009levy}),
      for $q\geq2$, there exists $D(q)>0$ such that
      \begin{align}
        &\int_\Theta E \left [ \left | \int_{t_{(\ell-1)\vee0}}^1 \int_{0<|z|\leq1} \pmb{1}_{\{t\leq\tau_m^{n,\varepsilon,\ell}\}} \sum_{\nu=0}^{\ell} \beta_{\ell\nu} \left ( f(Y_{t-t_{\nu}}^{n,\varepsilon}, \theta) - f(x_t, \theta) \right ) \cdot z \, \tilde{N}(ds,dz) \right |^q \right ] d\theta \\
        \leq&\ D(q) \int_\Theta \left \{ E \left [ \left ( \int_{t_{(\ell-1)\vee0}}^1 \int_{0<|z|\leq1} \pmb{1}_{\{t\leq\tau_m^{n,\varepsilon,\ell}\}}  \left | \sum_{\nu=0}^{\ell} \beta_{\ell\nu} \left ( f(Y_{t-t_{\nu}}^{n,\varepsilon}, \theta) - f(x_t, \theta) \right ) \cdot z \right |^2 \nu(dz) \, dt \right )^{q/2} \right ] \right . \\
        &+ \left . E \left [ \int_{t_{(\ell-1)\vee0}}^1 \int_{0<|z|\leq1} \pmb{1}_{\{t\leq\tau_m^{n,\varepsilon,\ell}\}} \left | \sum_{\nu=0}^{\ell} \beta_{\ell\nu} \left ( f(Y_{t-t_{\nu}}^{n,\varepsilon}, \theta) - f(x_t, \theta) \right ) \cdot z \right |^q \nu(dz) \, dt \right ] \right \} \\
        \leq&\ D(q) C^q \left \{ \left ( \int_{0<|z|\leq1} |z|^2 \, \nu(dz) \right )^{q/2}
        E \left [ \left ( \int_{t_{(\ell-1)\vee0}}^1 \pmb{1}_{\{t\leq\tau_m^{n,\varepsilon,\ell}\}} \sup_{\nu=0,\dots,\ell} 2^{2\ell} \left | Y_{t-t_\nu}^{n,\varepsilon} - x_t \right |^2 dt \right )^{q/2} \right ] \right . \\
        &\left . + \left ( \int_{0<|z|\leq1} |z|^q \, \nu(dz) \right )
        E \left [ \int_{t_{(\ell-1)\vee0}}^1 \pmb{1}_{\{t\leq\tau_m^{n,\varepsilon,\ell}\}} \sup_{\nu=0,\dots,\ell} 2^{q\ell} \left | Y_{t-t_\nu}^{n,\varepsilon} - x_t \right |^q dt \right ] \right \}.
      \end{align}
      Both converge to zero as $\ell2^{\ell}/n\to0$ and $2^\ell\varepsilon\to0$, by dominated convergence theorem, and so does \eqref{eq: P(sup |int dLt| > eta)}.
    \end{proof}

    \begin{proposition}\label{prop: 2.7}
      Let $f$ be a function as in Lemma \ref{lem: conv 4}.
      Under the assumptions \ref{asmp: 2} and \ref{asmp: 4},
      if $\ell2^{4\ell}/n\to0$, $2^\ell\varepsilon\to0$ and $\ell2^{2\ell}/n\varepsilon\to0$ as $n\to\infty$ and $\varepsilon\to0$,
      then it holds that
      \begin{equation}
        \frac{1}{\varepsilon} \sum_{k=\ell\vee1}^n
        A_\ell f(\pmb{X}_{t_k:t_{k-\ell}}^{\varepsilon}, \theta)
        \cdot \left ( X_{t_k}^{\varepsilon} - X_{t_{k-1}}^{\varepsilon} - \frac{1}{n} A_\ell b(\pmb{X}_{t_k:t_{k-\ell}}^{\varepsilon}, \theta_0) \right )
        \overset{P_{\theta_0}}{\longrightarrow}
        \int_0^1 f(x_t,\theta) \cdot dL_t
      \end{equation}
      as $n\to\infty$ and $\varepsilon\to0$, uniformly in $\theta\in\Theta$.
    \end{proposition}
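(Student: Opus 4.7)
The plan is to split the target expression into a Lévy-integral piece that Lemma \ref{lem: conv 4} already handles, plus a discretization-error piece that must vanish. Substituting the SDE identity $X^{\varepsilon}_{t_k} - X^{\varepsilon}_{t_{k-1}} = \int_{t_{k-1}}^{t_k} b(X^{\varepsilon}_s, \theta_0)\,ds + \varepsilon(L_{t_k} - L_{t_{k-1}})$ rewrites the sum as
\[
\sum_{k=\ell\vee1}^n A_\ell f(\pmb{X}^{\varepsilon}_{t_k:t_{k-\ell}}, \theta) \cdot (L_{t_k} - L_{t_{k-1}}) + \frac{1}{\varepsilon} \sum_{k=\ell\vee1}^n A_\ell f(\pmb{X}^{\varepsilon}_{t_k:t_{k-\ell}}, \theta) \cdot R_k,
\]
where $R_k := \int_{t_{k-1}}^{t_k} b(X^{\varepsilon}_s, \theta_0)\,ds - \frac{1}{n} A_\ell b(\pmb{X}^{\varepsilon}_{t_k:t_{k-\ell}}, \theta_0)$. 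The first sum converges to $\int_0^1 f(x_t, \theta)\cdot dL_t$ in probability, uniformly in $\theta$, by Lemma \ref{lem: conv 4}; the rates $\ell 2^\ell/n \to 0$ and $2^\ell \varepsilon \to 0$ needed there are implied by the present hypotheses. It remains to show the discretization-error sum converges to zero in probability, uniformly in $\theta$.

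For this, I would decompose $R_k = R_k^{(\mathrm{a})} + R_k^{(\mathrm{b})} + R_k^{(\mathrm{c})}$ by inserting the deterministic reference values, where $R_k^{(\mathrm{b})} = \int_{t_{k-1}}^{t_k} b(x_s,\theta_0)\,ds - \frac{1}{n}A_\ell b(\pmb{x}_{t_k:t_{k-\ell}}, \theta_0)$ is the pure Adams-Moulton error on the deterministic trajectory, while $R_k^{(\mathrm{a})} = \int_{t_{k-1}}^{t_k}[b(X^\varepsilon_s, \theta_0) - b(x_s, \theta_0)]\,ds$ and $R_k^{(\mathrm{c})} = \frac{1}{n}[A_\ell b(\pmb{x}_{t_k:t_{k-\ell}}, \theta_0) - A_\ell b(\pmb{X}^\varepsilon_{t_k:t_{k-\ell}}, \theta_0)]$ are the stochastic remainders. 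For $R_k^{(\mathrm{b})}$, Lemma \ref{lem: al f deterministic} with $f = b$ yields the uniform bound $|R_k^{(\mathrm{b})}| \leq \ell!\,d^{\ell+1}\|b(\theta_0)\|_{C^\infty(B_M)}^{\ell+2}/n^{\ell+2}$; combined with $\sum_k|A_\ell f(\pmb{X}^{\varepsilon}, \theta)| = O(n)$ a.s.\ (from Proposition \ref{prop: a.s. conv sum | Al f(X) |^q} at $q=1$), the $\varepsilon^{-1}$-rescaled contribution vanishes under $\ell 2^{2\ell}/(n\varepsilon) \to 0$, since $\ell!/n^\ell$ is eventually dominated by $\ell 2^{2\ell}$. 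For $R_k^{(\mathrm{a})}$ and $R_k^{(\mathrm{c})}$, the Lipschitz assumption \ref{asmp: 2}, the estimate $\sup_s|X^\varepsilon_s - x_s| \leq e^C \varepsilon \sup_s|L_s|$ from the proof of Proposition \ref{prop: Yt to Xt}, and the coefficient bound $\sum_\nu|\beta_{\ell\nu}| \leq C2^\ell$ yield a combined pointwise estimate of order $2^\ell \varepsilon\sup|L|/n$; multiplied by $|A_\ell f| = O(2^\ell)$, summed, and divided by $\varepsilon$, this is of order $2^{2\ell}\sup|L|$, which vanishes under $\ell 2^{4\ell}/n \to 0$ via either a finer telescoping of the merged Adams-Moulton error for $g_\varepsilon(s) = b(X^\varepsilon_s, \theta_0) - b(x_s, \theta_0)$ or the Sobolev-embedding and moment technique already deployed in Lemma \ref{lem: conv 4}.

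Throughout the argument I would localize using the stopping times $\tau_m^{n,\varepsilon,\ell}$ from Proposition \ref{prop: Yt to Xt}.(ii), which renders every appearance of $\|b(\theta_0)\|_{C^\infty(B_M)}$ finite and deterministic, with the final $m \to \infty$ step recovering the in-probability claim. The main obstacle I anticipate is bookkeeping the Adams-Moulton coefficient blow-up: every application of $A_\ell$ contributes a factor of $2^\ell$, and the $\varepsilon^{-1}$ prefactor forces simultaneous use of all three rate hypotheses --- $\ell 2^{2\ell}/(n\varepsilon) \to 0$ absorbs the deterministic Adams-Moulton residue divided by $\varepsilon$, $2^\ell\varepsilon \to 0$ propagates the small-noise estimate $\sup|X^\varepsilon - x| = O(\varepsilon)$ through the coefficients, and $\ell 2^{4\ell}/n \to 0$ dominates the stochastic residual after multiplication by $|A_\ell f|$. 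Matching these three scales is the only delicate part; everything else is a mechanical combination of the preceding lemmas.
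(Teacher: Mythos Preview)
Your initial split into the L\'evy sum (handled by Lemma~\ref{lem: conv 4}) and the remainder $\varepsilon^{-1}\sum_k A_\ell f\cdot R_k$ matches the paper's decomposition $J_1+J_2$. The gap is in your treatment of $R_k^{(\mathrm{a})}+R_k^{(\mathrm{c})}$: the crude bound you compute, $2^{2\ell}\sup_s|L_s|$, does \emph{not} vanish under any of the three rate hypotheses --- it contains no factor of $n^{-1}$ and diverges whenever $\ell\to\infty$. Your sentence ``which vanishes under $\ell 2^{4\ell}/n\to0$ via a finer telescoping'' is therefore not an argument: the quantity you wrote down does not vanish, and the ``finer'' step that would replace it is exactly the missing idea.

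The paper avoids the detour through $x_t$ altogether. Writing $R_k=\sum_{\nu}\beta_{\ell\nu}\int_{t_{k-1}}^{t_k}\bigl(b(X^\varepsilon_t,\theta_0)-b(X^\varepsilon_{t_{k-\nu}},\theta_0)\bigr)\,dt$ (using $\sum_\nu\beta_{\ell\nu}=1$) and applying Gronwall's inequality directly to the SDE on $[t_{k-\nu},t]$ yields
\[
  \bigl|X^\varepsilon_t-X^\varepsilon_{t_{k-\nu}}\bigr|
  \;\le\; e^{C\ell/n}\Bigl(\tfrac{\ell}{n}\,|b(X^\varepsilon_{t_{k-\nu}},\theta_0)|
  +\varepsilon\sup_{s\in[t_{k-1},t_k]}|L_s-L_{t_{k-\nu}}|\Bigr).
\]
The crucial point is that the stochastic part carries its own factor $\varepsilon$, which cancels the prefactor $1/\varepsilon$; what remains is $\frac{2^{2\ell}}{n}\sum_{k}\sup_{\nu,s}|L_s-L_{t_{k-\nu}}|$, a sum of \emph{local} increments rather than $\sup|L|$. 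This is then shown to tend to zero in probability via the decomposition of $L$ and Doob's $L^2$-inequality on the martingale part, and it is precisely here that the condition $\ell 2^{4\ell}/n\to0$ enters. Your route through $x_t$ destroys this cancellation: $|X^\varepsilon_s-x_s|$ is uniformly $O(\varepsilon)$, so after dividing by $\varepsilon$ nothing small survives. If you carry out your suggested ``finer telescoping of $g_\varepsilon$'', you will recover exactly the paper's bound on $b(X^\varepsilon_t)-b(X^\varepsilon_{t_{k-\nu}})$, making the pieces $R_k^{(\mathrm{a})},R_k^{(\mathrm{b})},R_k^{(\mathrm{c})}$ an unnecessary detour.
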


    \begin{remark}
      This lemma will be essentially used for the case $\theta=\theta_0$.
    \end{remark}

    \begin{proof}
      It follows that
      \begin{align}
        &\frac{1}{\varepsilon} \sum_{k=\ell\vee1}^n A_\ell f(\pmb{X}_{t_k:t_{k-\ell}}^{\varepsilon}, \theta) \cdot \left ( X_{t_k}^{\varepsilon} - X_{t_{k-1}}^{\varepsilon} - \frac{1}{n} A_\ell b(\pmb{X}_{t_k:t_{k-\ell}}^{\varepsilon}, \theta_0) \right ) \\
        &\qquad\begin{aligned}
          &= \frac{1}{\varepsilon} \sum_{k=\ell\vee1}^n A_\ell f(\pmb{X}_{t_k:t_{k-\ell}}^{\varepsilon}, \theta) \cdot \left ( \int_{t_{k-1}}^{t_k} b(X_t^{\varepsilon},\theta_0) \, dt - \frac{1}{n} A_\ell b(\pmb{X}_{t_k:t_{k-\ell}}^{\varepsilon}, \theta_0) \right ) \\
          &\quad + \sum_{k=\ell\vee1}^n A_\ell f(\pmb{X}_{t_k:t_{k-\ell}}^{\varepsilon}, \theta) \cdot \left ( L_{t_k} - L_{t_{k-1}} \right ) \\
          &=: J_1 + J_2.
        \end{aligned}
        \label{eq: int f dL 1}
      \end{align}
      From Lemma \ref{lem: conv 4}, $J_2$ converges to $\int_0^1 f(x_t,\theta) \cdot dL_t$ in $P_{\theta_0}$ as $n\to\infty$, $\varepsilon\to0$, $\ell2^\ell/n\to0$ and $2^\ell\varepsilon\to0$, uniformly in $\theta\in\Theta$, and
      \begin{multline}
        |J_1|=
        \frac{1}{\varepsilon} \left | \sum_{\nu,\mu=0}^{\ell} \beta_{\ell\nu} \beta_{\ell\mu}
        \sum_{k=\ell\vee1}^n \int_{t_{k-1}}^{t_k} f(X_{t_{k-\mu}}^\varepsilon, \theta) \cdot \left ( b(X_t^\varepsilon, \theta_0) - b(X_{t_{k-\nu}}^{\varepsilon}, \theta_0) \right ) dt \right | \\
        \leq \left ( \sum_{\nu=0}^{\ell} \left | \beta_{\ell\nu} \right | \right )^2
        \frac{1}{n\varepsilon} \sum_{k=\ell\vee1}^n \sup_{\substack{\nu,\mu=0,\dots,\ell\\t\in[t_{k-1},t_k]}} \left | f(X_{t_{k-\mu}}^\varepsilon, \theta) \cdot \left ( b ( X_t^\varepsilon, \theta_0 ) - b(X_{t_{k-\nu}}^{\varepsilon}, \theta_0) \right ) \right | \\
        \leq \frac{C2^{2\ell}}{n\varepsilon} \sum_{k=\ell\vee1}^n \sup_{\substack{\nu,\mu=0,\dots,\ell\\t\in[t_{k-1},t_k]}} \left | f(X_{t_{k-\mu}}^\varepsilon, \theta) \right | \left | X_t^\varepsilon - X_{t_{k-\nu}}^{\varepsilon} \right |,
      \end{multline}
      where $C$ is a Lipschitz constant in \ref{asmp: 2}.
      For $t\in[t_{(\ell-1)\vee0},1)$,
      \begin{equation}
        \begin{aligned}
          \left | X_t^\varepsilon - X_{t_{k-\nu}}^\varepsilon \right |
          &= \left | \int_{t_{k-\nu}}^t b(X_s^\varepsilon,\theta_0) \, ds + \varepsilon (L_t - L_{t_{k-\nu}}) \right | \\
          &\leq C \int_{t_{k-\nu}}^t \left | X_s^\varepsilon -X_{t_{k-\nu}}^\varepsilon \right | \, ds + \frac{\ell}{n} \left | b(X_{t_{k-\nu}}^\varepsilon,\theta_0) \right |
          + \varepsilon \sup_{s\in[t_{k-1},t_k]} \left | L_s - L_{t_{k-\nu}} \right |
        \end{aligned}
      \end{equation}
      and by Gronwall's inequality, we obtain
      \begin{equation}
        \left | X_t^\varepsilon - X_{t_{k-\nu}}^\varepsilon \right |
        \leq e^{C(t-t_{k-\nu})} \left ( \frac{\ell}{n} \left | b(X_{t_{k-\nu}}^\varepsilon,\theta_0) \right |
        + \varepsilon \sup_{s\in[t_{k-1},t_k]} \left | L_s - L_{t_{k-\nu}} \right | \right ).
      \end{equation}
      Thus,
      \begin{equation}
        |J_1| \leq
        Ce^{C\ell/n} \left ( \frac{\ell2^{2\ell}}{n\varepsilon} \sup_{s,t\in[0,1]} \left | b(X_s^\varepsilon,\theta_0) f(X_t^\varepsilon,\theta) \right |
        + \frac{2^{2\ell}}{n} \sum_{k=\ell\vee1}^n \sup_{\substack{\nu,\mu=0,\dots,\ell\\s\in[t_{k-1},t_k]}} \left | f(X_{t_{k-\mu}}^\varepsilon, \theta) \right | \left | L_s - L_{t_{k-\nu}} \right | \right ).
      \end{equation}
      The next to the last term converges almost surely to zero as $\varepsilon\to0$ and $\ell2^{2\ell}/n\varepsilon\to0$, uniformly in $\theta\in\Theta$.
      We remain to prove that
      \begin{equation}
        \frac{2^{2\ell}}{n} \sum_{k=\ell\vee1}^n \sup_{\substack{\nu=0,\dots,\ell \\ s\in[t_{k-1},t_k]}} \left | L_s - L_{t_{k-\nu}} \right |
        \overset{P_{\theta_0}}{\longrightarrow} 0.
      \end{equation}
      This follows from the fact that
      \begin{equation}
        \sup_{\substack{\nu=0,\dots,\ell \\ s\in[t_{k-1},t_k]}} \left | L_s - L_{t_{k-\nu}} \right |
        \leq \sup_{\substack{\nu=0,\dots,\ell \\ s\in[t_{k-1},t_k]}} \left | \tilde{L}_s - \tilde{L}_{t_{k-\nu}} \right |
        + (t_k - t_{k-\ell\vee1}) + \int_{t_{k-\ell\vee1}}^{t_k} \int_{|z|>1} |z| \, N(ds,dz),
      \end{equation}
      where
      \begin{equation}
        \begin{aligned}
          & \frac{2^{2\ell}}{n} \sum_{k=\ell\vee1}^n (t_k - t_{k-\ell\vee1})
          = \frac{2^{2\ell}}{n} \sum_{\nu=0}^{(\ell-1)\vee0} (t_{n-\nu} - t_\nu)
          = \frac{2^{2\ell}}{n} \sum_{\nu=0}^{(\ell-1)\vee0} \left ( 1 - \frac{2\nu}{n} \right ) \leq \frac{\ell2^{2\ell}}{n} \to 0, \\
          & \frac{2^{2\ell}}{n} \sum_{k=\ell\vee1}^n \int_{t_{k-\ell\vee1}}^{t_k} \int_{|z|>1} |z| \, N(ds,dz)
          \leq \frac{\ell2^{2\ell}}{n} \int_{0}^{1} \int_{|z|>1} |z| \, N(ds,dz)
          \overset{a.s.}{\longrightarrow} 0
          \qquad \text{as} \quad\frac{\ell2^{2\ell}}{n} \to 0,
        \end{aligned}
      \end{equation}
      and by Doob's martingale inequality (see, \textit{e.g.}, Theorem 2.1.5 in Applebaum \cite{applebaum2009levy})
      \begin{equation}
        \begin{aligned}
          \frac{2^{2\ell}}{n} \sum_{k=\ell\vee1}^n
          E \left [ \sup_{\substack{\nu=0,\dots,\ell \\ s\in[t_{k-1},t_k]}} \left | \tilde{L}_s - \tilde{L}_{t_{k-\nu}} \right | \right ]
          &\leq 2^{2\ell} \left ( \frac{1}{n} \sum_{k=\ell\vee1}^n E \left [ \sup_{\substack{\nu=0,\dots,\ell \\ s\in[t_{k-1},t_k]}} \left | \tilde{L}_s - \tilde{L}_{t_{k-\nu}} \right |^2 \right ] \right )^{1/2} \\
          &\leq C \left ( \frac{2^{4\ell}}{n} \sum_{k=\ell\vee1}^n E \left [ \left | \tilde{L}_{t_k} - \tilde{L}_{t_{k-\nu}} \right |^2 \right ] \right )^{1/2} \\
          &\leq C \left ( \frac{\ell2^{4\ell}}{n} \left ( \|\sigma\|_F^2 + \int_{|z|\leq1} |z|^2 \, \nu(dz) \right ) \right )^{1/2} \to 0
          \quad \text{as}~ \frac{\ell2^{4\ell}}{n} \to 0
        \end{aligned}
      \end{equation}
      with some positive constant $C$ independent of $n,\varepsilon,\ell$.
      Thus, for any $\eta>0$,
      \begin{multline}
        P \left ( \sup_{\theta\in\Theta} \frac{2^{2\ell}}{n} \sum_{k=\ell\vee1}^n \sup_{\substack{\nu,\mu=0,\dots,\ell\\t\in[t_{k-1},t_k]}} \left | f(X_{t_{k-\mu}}^\varepsilon, \theta) \right | \left | L_t - L_{t_{k-\nu}} \right | > \eta \right ) \\
        \leq P (1>\tau_m^{n,\varepsilon,\ell})
        + P \left ( \left \| f \right \|_{C(B_m\times\Theta)} \frac{2^{2\ell}}{n} \sum_{k=\ell\vee1}^n \sup_{\substack{\nu=0,\dots,\ell\\t\in[t_{k-1},t_k]}} \left | L_t - L_{t_{k-\nu}} \right | > \eta \right )
      \end{multline}
      converges in $P_{\theta_0}$ to zero
      as $n\to\infty$, $\varepsilon\to0$ and $\ell2^{4\ell}/n\to0$.
    \end{proof}

    Analogously, we obtain the following proposition.

    \begin{proposition}\label{prop: 2.8}
      Let $f$ be a function as in Lemma \ref{lem: conv 4}.
      Under the assumptions \ref{asmp: 2} and \ref{asmp: 4},
      if $2^\ell\varepsilon\to0$ and $\ell2^{2\ell}/n$ is bounded as $n\to\infty$ and $\varepsilon\to0$,
      then it holds that
      \begin{equation}
        \sum_{k=\ell\vee1}^n
        A_\ell f(\pmb{X}_{t_k:t_{k-\ell}}^{\varepsilon}, \theta)
        \cdot
        \left ( X_{t_k}^{\varepsilon} - X_{t_{k-1}}^{\varepsilon} - \frac{1}{n} A_\ell b(\pmb{X}_{t_k:t_{k-\ell}}^{\varepsilon}, \theta_0) \right )
        \overset{P_{\theta_0}}{\longrightarrow}0
      \end{equation}
      as $n\to\infty$ and $\varepsilon\to0$, uniformly in $\theta\in\Theta$.
    \end{proposition}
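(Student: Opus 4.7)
The plan is to adapt the proof of Proposition~\ref{prop: 2.7} essentially verbatim, exploiting the fact that the present sum equals $\varepsilon$ times the sum considered there (before its division by $\varepsilon$). First split $X_{t_k}^\varepsilon - X_{t_{k-1}}^\varepsilon = \int_{t_{k-1}}^{t_k} b(X_t^\varepsilon,\theta_0)\,dt + \varepsilon(L_{t_k}-L_{t_{k-1}})$ to write the sum as $S_{n,\varepsilon,\ell}(\theta) = K_1(\theta) + \varepsilon K_2(\theta)$, where
\begin{equation*}
K_1 := \sum_{k=\ell\vee 1}^n A_\ell f(\pmb{X}^\varepsilon,\theta)\cdot\Bigl(\int_{t_{k-1}}^{t_k} b(X_t^\varepsilon,\theta_0)\,dt - \tfrac{1}{n}A_\ell b(\pmb{X}^\varepsilon,\theta_0)\Bigr), \quad K_2 := \sum_{k=\ell\vee 1}^n A_\ell f(\pmb{X}^\varepsilon,\theta)\cdot(L_{t_k}-L_{t_{k-1}}).
\end{equation*}
These coincide with $\varepsilon J_1$ and $J_2$ from the proof of Proposition~\ref{prop: 2.7}, so the task is to show $\varepsilon K_2 \overset{P_{\theta_0}}{\longrightarrow} 0$ and $K_1 \overset{P_{\theta_0}}{\longrightarrow} 0$ uniformly in $\theta$.

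For $\varepsilon K_2$, rewrite (using $\sum_{\nu}\beta_{\ell\nu}=1$) as $K_2 = \sum_\nu\beta_{\ell\nu}\int_{t_{(\ell-1)\vee 0}}^1 f(Y_{t-t_\nu}^{n,\varepsilon},\theta)\cdot dL_t$. Splitting $L$ into its drift, continuous-martingale and jump parts and applying Kunita's inequality, Morrey's inequality and the crude bound $\sum_\nu|\beta_{\ell\nu}|\leq 2^\ell$---exactly as in the proof of Lemma~\ref{lem: conv 4}---yields $K_2 = O_P(2^\ell)$ uniformly in $\theta$. Since $2^\ell\varepsilon\to 0$, $\varepsilon K_2 \overset{P_{\theta_0}}{\longrightarrow} 0$ uniformly in $\theta$.

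For $K_1$, apply the same Gronwall-based estimate as in Proposition~\ref{prop: 2.7} (now bounding $\varepsilon J_1$, not $J_1$) to obtain
\begin{equation*}
|K_1| \leq C e^{C\ell/n}\left(\frac{\ell 2^{2\ell}}{n}\sup_{s,t\in[0,1]}\bigl|b(X_s^\varepsilon,\theta_0) f(X_t^\varepsilon,\theta)\bigr| + \frac{2^{2\ell}\varepsilon}{n}\sum_{k=\ell\vee 1}^n \sup_{\substack{\nu,\mu=0,\dots,\ell\\s\in[t_{k-1},t_k]}}|f(X_{t_{k-\mu}}^\varepsilon,\theta)||L_s-L_{t_{k-\nu}}|\right).
\end{equation*}
The second summand is bounded via the Doob/Kunita moment estimate $E[\sup|\tilde L_s-\tilde L_{t_{k-\nu}}|^2]=O(\ell/n)$ from the proof of Proposition~\ref{prop: 2.7}, giving a contribution of order $2^{2\ell}\varepsilon\sqrt{\ell/n}=(2^\ell\varepsilon)\sqrt{\ell 2^{2\ell}/n}=o_P(1)$ under the present hypotheses.

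The main obstacle is the first summand $\tfrac{\ell 2^{2\ell}}{n}\sup|bf|$: localization by the stopping time $\tau_m^{n,\varepsilon,\ell}$ of Proposition~\ref{prop: Yt to Xt} makes the supremum bounded by a deterministic constant, but the coefficient $\ell 2^{2\ell}/n$ is only bounded, not vanishing. I would close this gap by a finer decomposition $K_1 = K_1^{\mathrm{det}} + (K_1 - K_1^{\mathrm{det}})$, with $K_1^{\mathrm{det}}$ obtained by replacing $X^\varepsilon$ by the deterministic $x$ throughout. By Lemma~\ref{lem: al f deterministic}, $K_1^{\mathrm{det}}$ is super-exponentially small in $\ell$; the stochastic remainder $K_1-K_1^{\mathrm{det}}$ is controlled via $\sup_{[0,1]}|X^\varepsilon-x|\leq C\varepsilon\sup|L|$ and the Lipschitz assumptions on $b,f$, producing a contribution of order $2^{2\ell}\varepsilon\sup|L|$, which combined with the algebraic identity $2^{2\ell}\varepsilon=(2^\ell\varepsilon)\sqrt{n\cdot\ell 2^{2\ell}/(n^2\ell)}\cdot 2^\ell\sqrt{\ell/n}$ and the two hypotheses $2^\ell\varepsilon\to 0$ and $\ell 2^{2\ell}/n=O(1)$ should yield the required $o_P(1)$.
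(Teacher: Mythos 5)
Your decomposition $S=K_1+\varepsilon K_2$, your treatment of $\varepsilon K_2$ via $2^\ell\varepsilon\to0$, and your estimate $(2^\ell\varepsilon)\sqrt{\ell 2^{2\ell}/n}$ for the $\tilde L$-increment part of $K_1$ are all sound, and you have correctly put your finger on the one genuinely delicate point: a verbatim transfer of the proof of Proposition \ref{prop: 2.7} leaves the term $(\ell 2^{2\ell}/n)\sup|bf|$, which is only $O_P(1)$ when $\ell 2^{2\ell}/n$ is merely bounded. (The paper itself offers no argument here beyond the word ``analogously,'' so this is exactly the point that needs work.) Your idea of splitting off the deterministic quadrature error $K_1^{\mathrm{det}}$, which Lemma \ref{lem: al f deterministic} makes super-exponentially small, is the right move.

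However, the way you close the gap does not work. Bounding the stochastic remainder $K_1-K_1^{\mathrm{det}}$ by one application of the Lipschitz property and $\sup_{[0,1]}|X^\varepsilon-x|\le Ce^C\varepsilon\sup|L|$ gives only $C\,2^{2\ell}\varepsilon\sup|L|$ (one factor $2^\ell$ from $|A_\ell f|$, another from the $\beta_{\ell\nu}$ inside $c_k-c_k^{\mathrm{det}}$, and only a single factor $\varepsilon$), and $2^{2\ell}\varepsilon$ is \emph{not} controlled by the hypotheses: take $\ell\to\infty$, $n=\lceil\ell 2^{2\ell}\rceil$, $\varepsilon=2^{-3\ell/2}$; then $2^\ell\varepsilon=2^{-\ell/2}\to0$ and $\ell 2^{2\ell}/n$ is bounded, yet $2^{2\ell}\varepsilon=2^{\ell/2}\to\infty$. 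Your ``algebraic identity'' is false: $(2^\ell\varepsilon)\sqrt{n\cdot\ell 2^{2\ell}/(n^2\ell)}\cdot 2^\ell\sqrt{\ell/n}=2^{3\ell}\sqrt{\ell}\,\varepsilon/n$, which does not equal $2^{2\ell}\varepsilon$ in general. To actually finish, you must show that the increments of $g_t:=b(X_t^\varepsilon,\theta_0)-b(x_t,\theta_0)$ over a window of length $(\ell+1)/n$ gain \emph{both} a factor $\varepsilon$ and an additional smallness: writing $g_t=H_t(X_t^\varepsilon-x_t)$ with $H_t=\int_0^1\nabla b(x_t+u(X_t^\varepsilon-x_t))\,du$ and using \ref{asmp: 4} on the localized event $\{1\le\tau_m^{n,\varepsilon,\ell}\}$, one gets $|g_t-g_{t_{k-\nu}}|\le C\varepsilon\bigl(\tfrac{\ell+1}{n}+|L_t-L_{t_{k-\nu}}|+\varepsilon\sup|L|\bigr)\sup|L|$, after which the three resulting contributions to $\sum_k|A_\ell f|\,|c_k-c_k^{\mathrm{det}}|$ are of orders $(\ell 2^{2\ell}/n)\varepsilon$, $(2^\ell\varepsilon)^2$ and $(2^\ell\varepsilon)\sqrt{\ell 2^{2\ell}/n}$, each of which vanishes under the stated hypotheses. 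Without this second-order step your argument proves the proposition only under the stronger condition $2^{2\ell}\varepsilon\to0$.
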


\section{Main result}
  To prove our main results, we essentially follow the idea by Uchida \cite{uchida2004estimation} and Long \textit{et al.} \cite{long2013least}.

  \begin{theorem}[Consistency]\label{thm: consistency}
    Under conditions \ref{asmp: 2}-\ref{asmp: 5}, the least squares estimator $\hat{\theta}_{n,\varepsilon,\ell}$ given in \eqref{eq: def of estimator theta_n} is consistent to $\theta_0$, \textit{i.e.},
    if $2^\ell \varepsilon\to0$ and $\ell2^{2\ell}/n$ is bounded as $n\to\infty$ and $\varepsilon\to0$, then
    \begin{equation}
      \hat{\theta}_{n,\varepsilon,\ell}
      \overset{P_{\theta_0}}{\longrightarrow}
      \theta_0
    \end{equation}
    as $n\to\infty$ and $\varepsilon\to0$.
  \end{theorem}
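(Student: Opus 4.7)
The plan is to follow the standard M-estimator template: decompose $\Phi_{n,\varepsilon,\ell}$ into a cross term and a quadratic term, use Propositions~\ref{prop: a.s. conv sum | Al f(X) |^q} and~\ref{prop: 2.8} to pass to a deterministic limit $F$ uniformly in $\theta$, and close with the usual argmin argument after identifying $\theta_0$ as the unique minimiser of $F$.

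Concretely, set $R_k := X_{t_k}^\varepsilon - X_{t_{k-1}}^\varepsilon - \frac{1}{n} A_\ell b(\pmb{X}^\varepsilon_{t_k:t_{k-\ell}},\theta_0)$ and $\tilde b(x,\theta) := b(x,\theta) - b(x,\theta_0)$, so that
\[
X_{t_k}^\varepsilon - X_{t_{k-1}}^\varepsilon - \tfrac{1}{n} A_\ell b(\pmb{X}^\varepsilon_{t_k:t_{k-\ell}},\theta) = R_k - \tfrac{1}{n} A_\ell \tilde b(\pmb{X}^\varepsilon_{t_k:t_{k-\ell}},\theta).
\]
Expanding the squares in $\Phi_{n,\varepsilon,\ell}(\theta) = \varepsilon^2(\Psi_{n,\varepsilon,\ell}(\theta) - \Psi_{n,\varepsilon,\ell}(\theta_0))$ gives
\[
\Phi_{n,\varepsilon,\ell}(\theta) = -2\sum_{k=\ell\vee 1}^{n} A_\ell \tilde b(\pmb{X}^\varepsilon_{t_k:t_{k-\ell}},\theta)\cdot R_k + \frac{1}{n}\sum_{k=\ell\vee 1}^{n} \bigl|A_\ell \tilde b(\pmb{X}^\varepsilon_{t_k:t_{k-\ell}},\theta)\bigr|^2.
\]
Under (A2)--(A4) the function $\tilde b$ inherits equi-Lipschitzness in $x$, membership in $C^{\infty,0}(\mathbb{R}^d\times\Theta;\mathbb{R}^d)$ with locally bounded $x$-derivatives, and equi-Lipschitz $\theta$-derivatives (since $\partial_{\theta_j}\tilde b=\partial_{\theta_j}b$); thus it meets the hypotheses of Propositions~\ref{prop: a.s. conv sum | Al f(X) |^q} and~\ref{prop: 2.8}. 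Moreover, $\ell 2^{2\ell}/n$ bounded forces $\ell/n\to 0$ (either $\ell$ stays bounded or $\ell/n\le M/2^{2\ell}\to 0$). Hence Proposition~\ref{prop: 2.8} applied with $f=\tilde b$ sends the cross sum to $0$ in $P_{\theta_0}$ uniformly in $\theta$, while Proposition~\ref{prop: a.s. conv sum | Al f(X) |^q} with $q=2$, $f=\tilde b$, yields
\[
\frac{1}{n}\sum_{k=\ell\vee 1}^{n} \bigl|A_\ell \tilde b(\pmb{X}^\varepsilon_{t_k:t_{k-\ell}},\theta)\bigr|^2 \overset{a.s.}{\longrightarrow} \int_0^1 \bigl|b(x_t,\theta) - b(x_t,\theta_0)\bigr|^2\,dt =: F(\theta)
\]
uniformly in $\theta\in\Theta$. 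Combining the two gives $\Phi_{n,\varepsilon,\ell}(\theta) \overset{P_{\theta_0}}{\longrightarrow} F(\theta)$ uniformly on $\Theta$.

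Identification is then immediate: $F$ is continuous on the compact set $\Theta$, $F(\theta_0)=0$, and by (A5) together with continuity of $t\mapsto b(x_t,\theta)-b(x_t,\theta_0)$, $F(\theta)>0$ for every $\theta\ne\theta_0$; consequently $\eta_\delta := \inf_{|\theta-\theta_0|\ge\delta} F(\theta)>0$ for each $\delta>0$. Since $\Phi_{n,\varepsilon,\ell}(\theta_0)=0$ and $\hat\theta_{n,\varepsilon,\ell}$ is an argmin, uniform convergence gives $F(\hat\theta_{n,\varepsilon,\ell}) \le \Phi_{n,\varepsilon,\ell}(\theta_0) + o_{P_{\theta_0}}(1) = o_{P_{\theta_0}}(1)$, so
\[
P_{\theta_0}\bigl(|\hat\theta_{n,\varepsilon,\ell} - \theta_0| \ge \delta\bigr) \le P_{\theta_0}\bigl(F(\hat\theta_{n,\varepsilon,\ell}) \ge \eta_\delta\bigr) \to 0.
\]
The analytic substance has already been absorbed into Propositions~\ref{prop: a.s. conv sum | Al f(X) |^q} and~\ref{prop: 2.8}, so no serious obstacle remains; the only points to watch are the reduction of the stated rate hypotheses to those required by the two propositions and the verification that $\tilde b$ lies in the admissible class, both of which are routine because differences of $b$ preserve all the structural assumptions.
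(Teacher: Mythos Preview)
Your proof is correct and follows essentially the same approach as the paper: the same decomposition of $\Phi_{n,\varepsilon,\ell}$ into a cross term handled by Proposition~\ref{prop: 2.8} and a quadratic term handled by Proposition~\ref{prop: a.s. conv sum | Al f(X) |^q}, followed by the standard argmin argument (which the paper compresses into a citation of Theorem~5.9 in van der Vaart). Your explicit verification that the rate hypotheses reduce to those of the propositions and that $\tilde b$ lies in the admissible class is a welcome addition but does not constitute a different route.
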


  \begin{proof}
    Let $f(x,\theta)=b(x,\theta_0)-b(x,\theta)$.
    Since
    \begin{align}
      \Phi_{n,\varepsilon,\ell}(\theta)
      =&\ 2 \sum_{k=\ell\vee1}^n \left ( X_{t_k}^{\varepsilon} - X_{t_{k-1}}^{\varepsilon} - \frac{1}{n} A_\ell b(\pmb{X}_{t_k:t_{k-\ell}}^{\varepsilon}, \theta_0) \right )
      \cdot A_\ell f(\pmb{X}_{t_k:t_{k-\ell}}^{\varepsilon}, \theta) \\
      &+ \frac{1}{n} \sum_{k=\ell\vee1}^n \left | A_\ell f(\pmb{X}_{t_k:t_{k-\ell}}^{\varepsilon}, \theta) \right |^2,
    \end{align}
    it follows from Proposition \ref{prop: a.s. conv sum | Al f(X) |^q} and \ref{prop: 2.8} that
    for any $\eta>0$, if $2^\ell \varepsilon\to0$ and $\ell2^{2\ell}/n$ is bounded, then
    \begin{equation}
      P \left ( \sup_{\theta\in\Theta} \left | \Phi_{n,\varepsilon,\ell}(\theta) - \int_0^1 \left | f(x_s,\theta) \right |^2 ds \right | > \eta \right ) \to 0
    \end{equation}
    as $n\to\infty$, $\varepsilon\to0$.
    Also, \ref{asmp: 5} implies that for any $\delta>0$
    \begin{equation}
      \inf_{|\theta- \theta_0|>\delta} \int_0^1 \left | f(x_s,\theta) \right |^2 ds
      > \int_0^1 \left | f(x_s,\theta_0) \right |^2 ds = 0.
    \end{equation}
    Thus, it follows from Theorem 5.9 in van der Vaart \cite{vandervaart1998} that $\hat{\theta}_{n,\varepsilon,\ell}$ is consistent to $\theta_0$.
  \end{proof}

  \begin{theorem}[Asymptotic distribution]\label{thm: asymp dstr}
    Under conditions \ref{asmp: 2}-\ref{asmp: 5},
    if $\ell2^{4\ell}/n\to0$, $2^\ell\varepsilon\to0$ and $\ell2^{2\ell}/n\varepsilon\to0$ as $n\to\infty$ and $\varepsilon\to0$, then
    \begin{equation}
      \varepsilon^{-1} \left ( \hat{\theta}_{n,\varepsilon,\ell} - \theta_0 \right )
      \overset{P_{\theta_0}}{\longrightarrow}
      I(\theta_0)^{-1} S(\theta_0)
    \end{equation}
    as $n\to\infty$ and $\varepsilon\to0$,
    where $I(\theta)$ is a $p\times p$ positive definite symmetric matrix with the $(i,j)$-th entry
    \begin{equation}
      I^{ij} (\theta) := \int_0^1 \partial_{\theta_i} b (x_t,\theta) \cdot \partial_{\theta_j} b (x_t,\theta) \, dt,
    \end{equation}
    and $S(\theta)$ is a $p$-dimensional vector with the $i$-th entry
    \begin{equation}
      S_{i}(\theta) := \int_0^1 \partial_{\theta_i} b (x_t, \theta) \cdot dL_t
    \end{equation}
    for $\theta\in\Theta$, respectively.
  \end{theorem}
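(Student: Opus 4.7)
The plan is to follow the standard $M$-estimator argument adapted to the small-noise regime, analogous to Uchida \cite{uchida2004estimation} and Long et al.\ \cite{long2013least}. By Theorem~\ref{thm: consistency} (whose rate assumptions are implied by those here), $\hat\theta_{n,\varepsilon,\ell}\overset{P_{\theta_0}}{\longrightarrow}\theta_0$, so with probability tending to one the estimator lies in $\Theta_0$ and the first-order condition $\nabla_\theta\Phi_{n,\varepsilon,\ell}(\hat\theta_{n,\varepsilon,\ell})=0$ holds. Writing $f(x,\theta):=b(x,\theta_0)-b(x,\theta)$ and $D_k(\theta_0):=X_{t_k}^{\varepsilon}-X_{t_{k-1}}^{\varepsilon}-\frac{1}{n}A_\ell b(\pmb{X}_{t_k:t_{k-\ell}}^{\varepsilon},\theta_0)$, I would first differentiate the decomposition of $\Phi_{n,\varepsilon,\ell}$ already used in the proof of Theorem~\ref{thm: consistency} to obtain
\begin{equation}
\partial_{\theta_i}\Phi_{n,\varepsilon,\ell}(\theta) = -2\sum_{k=\ell\vee1}^{n} D_k(\theta_0)\cdot A_\ell\partial_{\theta_i} b(\pmb{X}_{t_k:t_{k-\ell}}^{\varepsilon},\theta) -\frac{2}{n}\sum_{k=\ell\vee1}^{n} A_\ell f(\pmb{X}_{t_k:t_{k-\ell}}^{\varepsilon},\theta)\cdot A_\ell\partial_{\theta_i}b(\pmb{X}_{t_k:t_{k-\ell}}^{\varepsilon},\theta).
\end{equation}
The fundamental theorem of calculus in $\theta$ gives $f(x,\theta)=-\sum_{j=1}^p(\theta_j-\theta_{0,j})M_j(x,\theta)$ with $M_j(x,\theta):=\int_0^1\partial_{\theta_j}b(x,\theta_0+u(\theta-\theta_0))\,du$. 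Substituting into the first-order condition at $\theta=\hat\theta_{n,\varepsilon,\ell}$ and dividing by $\varepsilon$ produces the linear system $\sum_{j=1}^p H^{ij}_{n,\varepsilon,\ell}\,\varepsilon^{-1}(\hat\theta_{n,\varepsilon,\ell}-\theta_0)_j = G^i_{n,\varepsilon,\ell}$, where
\begin{equation}
H^{ij}_{n,\varepsilon,\ell}:=\frac{1}{n}\sum_{k=\ell\vee1}^{n} A_\ell M_j(\pmb{X}_{t_k:t_{k-\ell}}^{\varepsilon},\hat\theta_{n,\varepsilon,\ell})\cdot A_\ell\partial_{\theta_i}b(\pmb{X}_{t_k:t_{k-\ell}}^{\varepsilon},\hat\theta_{n,\varepsilon,\ell}),
\end{equation}
\begin{equation}
G^i_{n,\varepsilon,\ell}:=\frac{1}{\varepsilon}\sum_{k=\ell\vee1}^{n} D_k(\theta_0)\cdot A_\ell\partial_{\theta_i}b(\pmb{X}_{t_k:t_{k-\ell}}^{\varepsilon},\hat\theta_{n,\varepsilon,\ell}).
\end{equation}

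Next I would identify the limits of $G^i_{n,\varepsilon,\ell}$ and $H^{ij}_{n,\varepsilon,\ell}$. Proposition~\ref{prop: 2.7} applied with the function $\partial_{\theta_i}b$ (in the role of $f$) gives the uniform-in-$\theta$ convergence $\varepsilon^{-1}\sum_k D_k(\theta_0)\cdot A_\ell\partial_{\theta_i}b(\cdot,\theta)\overset{P_{\theta_0}}{\longrightarrow}\int_0^1\partial_{\theta_i}b(x_t,\theta)\cdot dL_t$ under the rate assumptions of the theorem; inserting the consistent $\hat\theta_{n,\varepsilon,\ell}$ and using continuity in $\theta$ of the limiting stochastic integral (itself a consequence of uniform continuity of $\partial_{\theta_i}b$ on the compact $B_M\times\Theta$) should then give $G^i_{n,\varepsilon,\ell}\overset{P_{\theta_0}}{\longrightarrow}S_i(\theta_0)$. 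The Remark following Proposition~\ref{prop: a.s. conv sum | Al f(X) |^q}, applied with the pair $(M_j,\partial_{\theta_i}b)$, provides the uniform-in-$\theta$ convergence of $\frac{1}{n}\sum_k A_\ell M_j(\cdot,\theta)\cdot A_\ell\partial_{\theta_i}b(\cdot,\theta)$ to $\int_0^1 M_j(x_t,\theta)\cdot\partial_{\theta_i}b(x_t,\theta)\,dt$; evaluating at $\theta=\hat\theta_{n,\varepsilon,\ell}$ and using the identity $M_j(x,\theta_0)=\partial_{\theta_j}b(x,\theta_0)$ yields $H^{ij}_{n,\varepsilon,\ell}\overset{P_{\theta_0}}{\longrightarrow}I^{ij}(\theta_0)$. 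Since $I(\theta_0)$ is positive definite and hence invertible, Slutsky's theorem delivers $\varepsilon^{-1}(\hat\theta_{n,\varepsilon,\ell}-\theta_0)\overset{P_{\theta_0}}{\longrightarrow}I(\theta_0)^{-1}S(\theta_0)$.

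The main technical obstacle is the substitution of the random $\hat\theta_{n,\varepsilon,\ell}$ into the limits of Propositions~\ref{prop: a.s. conv sum | Al f(X) |^q} and~\ref{prop: 2.7}, which are stated uniformly only over deterministic $\theta$. The standard remedy is to pass the supremum through, bounding $|G^i_{n,\varepsilon,\ell}-\int_0^1\partial_{\theta_i}b(x_t,\hat\theta_{n,\varepsilon,\ell})\cdot dL_t|\le\sup_{\theta\in\Theta}|\cdots|$, which vanishes in probability by Proposition~\ref{prop: 2.7}, and then invoking $\hat\theta_{n,\varepsilon,\ell}\to\theta_0$ together with continuity of the deterministic limit for the final step; an analogous two-step argument handles $H^{ij}_{n,\varepsilon,\ell}$. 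Verifying that $M_j$ meets the regularity hypotheses of the cited propositions --- membership in $C^{\infty,0}$, finiteness of $\|\cdot\|_{C^{\infty,0}(B_M\times\Theta)}$, and equi-Lipschitz continuity in $x$ --- is a routine consequence of assumptions~\ref{asmp: 2}--\ref{asmp: 4}, since integration in $u\in[0,1]$ preserves each of these properties.
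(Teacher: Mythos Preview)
Your argument is correct and closely parallels the paper's, but the linearization is carried out at a different place. The paper applies the mean value theorem directly to the gradient $\partial_{\theta_i}\Phi_{n,\varepsilon,\ell}$, producing the Hessian $\partial_{\theta_i}\partial_{\theta_j}\Phi_{n,\varepsilon,\ell}$ evaluated along the segment $\theta_0+u(\hat\theta_{n,\varepsilon,\ell}-\theta_0)$; the limiting behaviour of this Hessian is handled by a separate Proposition~\ref{prop: 3.3}, and the conclusion is drawn from Lemma~\ref{lem: v_n=M_n w_n}. You instead apply the fundamental theorem of calculus to $f(x,\theta)=b(x,\theta_0)-b(x,\theta)$ inside the gradient formula, which yields the non-symmetric matrix $H^{ij}_{n,\varepsilon,\ell}$ built from the auxiliary functions $M_j$. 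A pleasant by-product of your route is that it never differentiates $b$ twice in $\theta$: the paper's Proposition~\ref{prop: 3.3}(ii) needs $\partial_{\theta_i}\partial_{\theta_j}b$, which is not literally covered by \ref{asmp: 2}--\ref{asmp: 5}, whereas your $M_j$ involves only first $\theta$-derivatives. The trade-off is that you must verify that $M_j$ inherits the $C^{\infty,0}$ and equi-Lipschitz hypotheses of Proposition~\ref{prop: a.s. conv sum | Al f(X) |^q}; this is indeed routine, but it relies on the same implicit regularity of $\partial_{\theta_j}b$ in $x$ that the paper also uses tacitly. Finally, since your $H_{n,\varepsilon,\ell}$ is not symmetric, Lemma~\ref{lem: v_n=M_n w_n} as stated does not apply; your appeal to Slutsky via the continuous mapping theorem for matrix inversion at the invertible limit $I(\theta_0)$ is the correct substitute and achieves the same end.
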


  \begin{remark}
    The consistency of $\tilde{\theta}_{n,\varepsilon,\ell}$ given by \eqref{eq: def of estimator theta_n via adams-bashforth} also holds
    if $\ell2^\ell\varepsilon\to0$ and $\ell^32^{2\ell}/n$ is bounded as $n\to\infty$ and $\varepsilon\to0$.
    In Theorem \ref{thm: asymp dstr},
    the corresponding convergence for $\tilde{\theta}_{n,\varepsilon,\ell}$ holds
    if $\ell^52^{4\ell}/n\to0$, $\ell2^\ell\varepsilon\to0$ and $\ell^32^{2\ell}/n\varepsilon\to0$ as $n\to\infty$ and $\varepsilon\to0$.
  \end{remark}

  To prove Theorem \ref{thm: asymp dstr}, we prepare the following proposition.

  \begin{proposition}\label{prop: 3.3}
    Assume the conditions \ref{asmp: 2}-\ref{asmp: 5}.
    \begin{enumerate}
      \item[\rm (i)] If $\ell2^{4\ell}/n\to0$, $2^\ell\varepsilon\to0$ and $\ell2^{2\ell}/n\varepsilon\to0$ as $n\to\infty$ and $\varepsilon\to0$, then
                  \begin{equation}
                    \varepsilon^{-1} \partial_{\theta_i} \Phi_{n,\varepsilon,\ell} (\theta_0)
                    \overset{P_{\theta_0}}{\longrightarrow}
                    -2 S_i(\theta_0)
                  \end{equation}
                  as $n\to\infty$ and $\varepsilon\to0$.
      \item[\rm (ii)] If $2^{\ell}\varepsilon\to0$ and $\ell2^{2\ell}/n$ is bounded as $n\to\infty$ and $\varepsilon\to0$, then
                  \begin{equation}\label{eq: conv to I(theta)}
                    \partial_{\theta_i} \partial_{\theta_j} \Phi_{n,\varepsilon,\ell}(\theta)
                    \overset{P_{\theta_0}}{\longrightarrow}
                    2 I^{ij}(\theta)
                  \end{equation}
                  as $n\to\infty$ and $\varepsilon\to0$, uniformly in $\theta\in\Theta$.
    \end{enumerate}
  \end{proposition}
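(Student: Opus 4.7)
The plan is to differentiate $\Phi_{n,\varepsilon,\ell}$ in $\theta$ term by term and match each resulting sum with one of the convergences established in Section 2. Since $\Phi_{n,\varepsilon,\ell}(\theta) = \varepsilon^2 \Psi_{n,\varepsilon,\ell}(\theta) - \varepsilon^2 \Psi_{n,\varepsilon,\ell}(\theta_0)$ and the subtracted part is $\theta$-independent, the $\theta$-derivatives of $\Phi_{n,\varepsilon,\ell}$ coincide with those of $\varepsilon^2 \Psi_{n,\varepsilon,\ell}$. A direct computation gives
\begin{equation}
\partial_{\theta_i}\Phi_{n,\varepsilon,\ell}(\theta) = -2\sum_{k=\ell\vee 1}^n A_\ell \partial_{\theta_i}b(\pmb{X}^\varepsilon_{t_k:t_{k-\ell}},\theta) \cdot \left( X_{t_k}^\varepsilon - X_{t_{k-1}}^\varepsilon - \frac{1}{n} A_\ell b(\pmb{X}^\varepsilon_{t_k:t_{k-\ell}},\theta) \right).
\end{equation}
For (i), setting $\theta=\theta_0$ and dividing by $\varepsilon$ produces exactly the expression on the left-hand side of Proposition~\ref{prop: 2.7} with $f=\partial_{\theta_i}b$. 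Under \ref{asmp: 2}--\ref{asmp: 4} the function $\partial_{\theta_i}b$ satisfies the hypotheses of that proposition, so the $P_{\theta_0}$-limit is $-2\int_0^1 \partial_{\theta_i}b(x_t,\theta_0)\cdot dL_t = -2S_i(\theta_0)$.

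For (ii), differentiating once more produces two sums:
\begin{align}
\partial_{\theta_i}\partial_{\theta_j}\Phi_{n,\varepsilon,\ell}(\theta)
&= \frac{2}{n}\sum_{k=\ell\vee 1}^n A_\ell \partial_{\theta_i}b(\pmb{X}^\varepsilon_{t_k:t_{k-\ell}},\theta) \cdot A_\ell \partial_{\theta_j}b(\pmb{X}^\varepsilon_{t_k:t_{k-\ell}},\theta) \\
&\quad - 2\sum_{k=\ell\vee 1}^n A_\ell \partial_{\theta_i}\partial_{\theta_j}b(\pmb{X}^\varepsilon_{t_k:t_{k-\ell}},\theta) \cdot \left( X_{t_k}^\varepsilon - X_{t_{k-1}}^\varepsilon - \frac{1}{n} A_\ell b(\pmb{X}^\varepsilon_{t_k:t_{k-\ell}},\theta) \right).
\end{align}
The first sum converges uniformly in $\theta \in \Theta$ to $2 I^{ij}(\theta)$ by the bilinear remark following Proposition~\ref{prop: a.s. conv sum | Al f(X) |^q}, applied with $f = \partial_{\theta_i}b$ and $g = \partial_{\theta_j}b$. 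For the second sum, we decompose the bracketed increment as
\begin{equation}
\left( X_{t_k}^\varepsilon - X_{t_{k-1}}^\varepsilon - \frac{1}{n} A_\ell b(\pmb{X}^\varepsilon_{t_k:t_{k-\ell}},\theta_0) \right) + \frac{1}{n}\left( A_\ell b(\pmb{X}^\varepsilon_{t_k:t_{k-\ell}},\theta_0) - A_\ell b(\pmb{X}^\varepsilon_{t_k:t_{k-\ell}},\theta) \right),
\end{equation}
apply Proposition~\ref{prop: 2.8} (with $f = \partial_{\theta_i}\partial_{\theta_j}b$) to the first piece to obtain uniform $P_{\theta_0}$-convergence to zero, and control the bilinear residual coming from the second piece via the remark after Proposition~\ref{prop: a.s. conv sum | Al f(X) |^q}.

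The main obstacle is two-fold. Firstly, \ref{asmp: 3} yields only first-order $\theta$-differentiability of $b$, whereas (ii) calls for the auxiliary results applied to $\partial_{\theta_i}\partial_{\theta_j}b$; this requires either an implicit strengthening of \ref{asmp: 3} (equi-Lipschitz second $\theta$-derivatives of $b$ in $x$) or a mollification argument, together with a careful tracking of the $\theta$-uniform convergence through Propositions~\ref{prop: a.s. conv sum | Al f(X) |^q}--\ref{prop: 2.8}. Secondly, the bilinear residual in (ii) produces a limit $\int_0^1 \bigl(b(x_t,\theta_0) - b(x_t,\theta)\bigr) \cdot \partial_{\theta_i}\partial_{\theta_j}b(x_t,\theta)\, dt$ that vanishes at $\theta=\theta_0$ but not identically in $\theta$; identifying the stated uniform limit $2 I^{ij}(\theta)$ on all of $\Theta$ rather than merely near $\theta_0$ seems the delicate point, although only the local behaviour at $\theta_0$ is ultimately needed for the Taylor-expansion argument behind Theorem~\ref{thm: asymp dstr}.
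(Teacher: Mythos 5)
Your computation follows the same route as the paper: part (i) is exactly the paper's argument (they apply Proposition~\ref{prop: 2.7} with $f=-2\partial_{\theta_i}b$, which is your application up to the constant), and for part (ii) the paper likewise splits the second derivative into the quadratic term, handled by Proposition~\ref{prop: a.s. conv sum | Al f(X) |^q} with $f=\partial_{\theta_i}b\pm\partial_{\theta_j}b$ and $q=2$ (equivalently your bilinear remark), and a remainder handled by Proposition~\ref{prop: 2.8}. The difference is that the paper writes the increment in both derivative formulas with $\theta_0$ rather than $\theta$ inside the bracket, i.e.
\begin{equation}
  -2 \sum_{k=\ell\vee1}^n A_\ell \partial_{\theta_i}\partial_{\theta_j} b(\pmb{X}_{t_k:t_{k-\ell}}^{\varepsilon}, \theta)
      \cdot \Bigl ( X_{t_k}^{\varepsilon} - X_{t_{k-1}}^{\varepsilon} - \tfrac{1}{n} A_\ell b(\pmb{X}_{t_k:t_{k-\ell}}^{\varepsilon}, \theta_0) \Bigr ),
\end{equation}
and thereby never confronts the residual term you isolate. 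Your version of the chain rule is the correct one, and your two ``obstacles'' are genuine defects of the statement and hypotheses rather than gaps in your argument: the residual converges to $-2\int_0^1\bigl(b(x_t,\theta_0)-b(x_t,\theta)\bigr)\cdot\partial_{\theta_i}\partial_{\theta_j}b(x_t,\theta)\,dt$, which is nonzero off $\theta_0$ whenever $\partial_{\theta_i}\partial_{\theta_j}b\not\equiv0$ (e.g.\ $b(x,\theta)=\theta^2x$), so \eqref{eq: conv to I(theta)} as stated uniformly on all of $\Theta$ is false in general; it does hold at $\theta=\theta_0$ and, by continuity, uniformly on shrinking neighborhoods of $\theta_0$, which is all that the proof of Theorem~\ref{thm: asymp dstr} uses. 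Likewise, the existence and regularity of $\partial_{\theta_i}\partial_{\theta_j}b$ needed to invoke Propositions~\ref{prop: a.s. conv sum | Al f(X) |^q} and~\ref{prop: 2.8} is not supplied by \ref{asmp: 3} and is implicitly assumed by the paper. In short, your proposal is the paper's proof done carefully; to close it out you need only add the continuity argument showing the residual's limit is $o(1)$ uniformly on a neighborhood $U$ of $\theta_0$ as $U$ shrinks, and state the strengthened $\theta$-smoothness hypothesis explicitly.
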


  \begin{proof}
    i) Since we have
    \begin{equation}
      \partial_{\theta_i} \Phi_{n,\varepsilon,\ell}(\theta)
      = -2 \sum_{k=\ell\vee1}^n A_\ell \partial_{\theta_i} b(\pmb{X}_{t_k:t_{k-\ell}}^{\varepsilon}, \theta)
      \cdot \left ( X_{t_k}^{\varepsilon} - X_{t_{k-1}}^{\varepsilon} - \frac{1}{n} A_\ell b(\pmb{X}_{t_k:t_{k-\ell}}^{\varepsilon}, \theta_0) \right ),
    \end{equation}
    the consequence follows by Proposition \ref{prop: 2.7} with $f(x,\theta)=-2\partial_{\theta_i} b(x,\theta)$.

    ii) We have
    \begin{equation}
      \begin{aligned}
        \partial_{\theta_i} \partial_{\theta_j} \Phi_{n,\varepsilon,\ell}(\theta)
        =&\ \frac{2}{n} \sum_{k=\ell\vee1}^n A_\ell \partial_{\theta_i} b(\pmb{X}_{t_k:t_{k-\ell}}^{\varepsilon}, \theta)
        \cdot
        A_\ell \partial_{\theta_j} b(\pmb{X}_{t_k:t_{k-\ell}}^{\varepsilon}, \theta) \\
        &-2 \sum_{k=\ell\vee1}^n A_\ell \partial_{\theta_i} \partial_{\theta_j}b(\pmb{X}_{t_k:t_{k-\ell}}^{\varepsilon}, \theta)
        \cdot
        \left ( X_{t_k}^{\varepsilon} - X_{t_{k-1}}^{\varepsilon} - \frac{1}{n} A_\ell b(\pmb{X}_{t_k:t_{k-\ell}}^{\varepsilon}, \theta_0) \right ) .
      \end{aligned}
    \end{equation}
    By Proposition \ref{prop: 2.8}, the second term in the right-hand side converges in $P_{\theta_{\theta_0}}$ to zero uniformly in $\theta\in\Theta$.
    Also, by using Proposition \ref{prop: a.s. conv sum | Al f(X) |^q} with $f(x,\theta) = \partial_{\theta_i} b(x,\theta) \pm \partial_{\theta_j} b(x,\theta)$
    and $q=2$, the first term converges almost surely to $2I^{ij}(\theta)$ uniformly in $\theta\in\Theta$.
  \end{proof}

  \begin{proof}[Proof of Theorem \ref{thm: asymp dstr}]
    It follows from the mean value theorem that
    \begin{equation}\label{eq: first eq in thm 3.2}
      \varepsilon^{-1} \left ( \partial_{\theta_i} \Phi_{n,\varepsilon,\ell} (\hat{\theta}_{n,\varepsilon,\ell}) - \partial_{\theta_i} \Phi_{n,\varepsilon,\ell} (\theta_0) \right )
      = \varepsilon^{-1} ( \hat{\theta}_{n,\varepsilon,\ell} - \theta_0 )
      \cdot \int_0^1 \nabla_{\theta} \partial_{\theta_i} \Phi_{n,\varepsilon,\ell} \left ( \theta_0 + u ( \hat{\theta}_{n,\varepsilon,\ell} - \theta_0 ) \right ) du.
    \end{equation}
    By the consistency of $\hat{\theta}_{n,\varepsilon,\ell}$ and Proposition \ref{prop: 3.3} (i),
    the left-hand side converges to $2S_i(\theta_0)$ in $P_{\theta_0}$ as $n\to\infty$ and $\varepsilon\to0$
    if $\ell2^{4\ell}/n\to0$, $2^\ell\varepsilon\to0$ and $\ell2^{2\ell}/n\varepsilon\to0$.

    For an arbitrary convex neighborhood $U$ of $\theta_0\in \Theta_0$, we have
    \begin{multline}
      \left | \partial_{\theta_i} \partial_{\theta_j} \Phi_{n,\varepsilon,\ell}(\theta_0)
      - \int_0^1 \partial_{\theta_i} \partial_{\theta_j} \Phi_{n,\varepsilon,\ell} \left ( \theta_0
      + u ( \hat{\theta}_{n,\varepsilon,\ell} - \theta_0 ) \right ) du \right | \pmb{1}_{\{\hat{\theta}_{n,\varepsilon,\ell}\in U\}} \\
      \leq \sup_{\theta\in U} \left | \partial_{\theta_i} \partial_{\theta_j} \Phi_{n,\varepsilon,\ell}(\theta_0) -\partial_{\theta_i} \partial_{\theta_j} \Phi_{n,\varepsilon,\ell}(\theta) \right | \\
      \leq 2 \sup_{\theta\in U} \left | \partial_{\theta_i} \partial_{\theta_j} \Phi_{n,\varepsilon,\ell}(\theta) - 2 I^{ij}(\theta) \right | + 2 \sup_{\theta\in U} \left | I^{ij}(\theta) - I^{ij}(\theta_0) \right |.
    \end{multline}
    It follows from the consistency of $\hat{\theta}_{n,\varepsilon,\ell}$, Proposition \ref{prop: 3.3} (ii) and the continuity of $\theta\mapsto I(\theta)$ that
    if $2^\ell \varepsilon\to0$ and $\ell2^{2\ell}/n$ is bounded, then
    \begin{equation}
      2 I_{n,\varepsilon,\ell}^{ij}
      := \int_0^1 \partial_{\theta_i} \partial_{\theta_j} \Phi_{n,\varepsilon,\ell}\left(\theta_0 + u(\hat{\theta}_{n,\varepsilon,\ell}-\theta_0)\right) du
      \overset{P_{\theta_0}}{\longrightarrow} 2I^{ij}(\theta_0)
    \end{equation}
    as $n\to\infty$ and $\varepsilon\to0$.
    By Lemma \ref{lem: v_n=M_n w_n}, the proof is complete.
  \end{proof}

\section{Numerical experiment}

  In this section, we give a simulation by numerical computation to compare our estimators with well-known least squares estimators for an Ornstein-Uhlenbeck process given by
  \begin{equation}\label{eq: OU process}
    dX_t = - \theta_0 X_t dt + \varepsilon dB_t, \qquad X_0 = x_0,
  \end{equation}
  where $B$ is the standard Brownian motion.
  For simplicity, we set $\theta_0=1$ and $x_0=1$ with $\varepsilon=0.1, 0.5, 1.0$ and $n=50, 100, 1000$.
  We shall compare our Adams-Moulton type estimators
  \begin{equation}
    \hat{\theta}_{n,\varepsilon,\ell} := \argmin_{\theta\in\Theta}
    \sum_{k=\ell}^{n} \left | X_{t_k}^{\varepsilon} - X_{t_{k-1}}^{\varepsilon} + \frac{1}{n} \theta A_\ell b(\pmb{X}_{t_k:t_{k-\ell}}^\varepsilon)  \right |^2
    \quad (\ell=1,\dots,6)
  \end{equation}
  to the usual `Euler-type' estimator
  \begin{equation}
    \hat{\theta}_{n,\varepsilon} := \argmin_{\theta\in\Theta}
    \sum_{k=1}^{n} \left | X_{t_k}^{\varepsilon} - X_{t_{k-1}}^{\varepsilon} + \frac{1}{n} \theta X_{t_{k-1}}^{\varepsilon} \right |^2,
  \end{equation}
  where $A_\ell b(\pmb{X}_{t_k:t_{k-\ell}}^\varepsilon)$ with $b(x)=x$ are given by
  \begin{align}
    &A_\ell b(\pmb{X}_{t_k:t_{k-\ell}}^\varepsilon) \\
    &\quad=
    \begin{cases}
      \frac{1}{2}X_{t_{k}}^\varepsilon+\frac{1}{2}X_{t_{k-1}}^\varepsilon &\text{if}~\ell=1, \\
      \frac{5}{12}X_{t_{k}}^\varepsilon+\frac{2}{3}X_{t_{k-1}}^\varepsilon-\frac{1}{12}X_{t_{k-2}}^\varepsilon &\text{if}~\ell=2, \\
      \frac{3}{8}X_{t_{k}}^\varepsilon+\frac{19}{24}X_{t_{k-1}}^\varepsilon-\frac{5}{24}X_{t_{k-2}}^\varepsilon+\frac{1}{24}X_{t_{k-3}}^\varepsilon &\text{if}~\ell=3, \\
      \frac{251}{720}X_{t_{k}}^\varepsilon+\frac{323}{360}X_{t_{k-1}}^\varepsilon-\frac{11}{30}X_{t_{k-2}}^\varepsilon+\frac{53}{360}X_{t_{k-3}}^\varepsilon-\frac{19}{720}X_{t_{k-4}}^\varepsilon &\text{if}~\ell=4, \\
      \frac{95}{288}X_{t_{k}}^\varepsilon+\frac{1427}{1440}X_{t_{k-1}}^\varepsilon-\frac{133}{240}X_{t_{k-2}}^\varepsilon+\frac{241}{720}X_{t_{k-3}}^\varepsilon-\frac{173}{1440}X_{t_{k-4}}^\varepsilon+\frac{3}{160}X_{t_{k-5}}^\varepsilon &\text{if}~\ell=5, \\
      \frac{19087}{60480}X_{t_{k}}^\varepsilon+\frac{2713}{2520}X_{t_{k-1}}^\varepsilon-\frac{15487}{20160}X_{t_{k-2}}^\varepsilon+\frac{586}{945}X_{t_{k-3}}^\varepsilon\\
      \hspace{126pt}-\frac{6737}{20160}X_{t_{k-4}}^\varepsilon+\frac{263}{2520}X_{t_{k-5}}^\varepsilon-\frac{863}{60480}X_{t_{k-6}}^\varepsilon &\text{if}~\ell=6.
    \end{cases}
  \end{align}
  Such coefficients from the Adams-Moulton method can be seen, \textit{e.g.}, in Table 244 in Butcher \cite{butcher2016numerical}.
  Note that the Euler-type LSE $\hat{\theta}_{n,\varepsilon}$ is slightly different from the Adams-Moulton type LSE $\hat{\theta}_{n,\varepsilon,0}$, \textit{i.e.}, `backward Euler-type',
  but for the sake of both similarity, we omit to consider $\hat{\theta}_{n,\varepsilon,0}$.

  \begin{table}[t]
    \begin{center}
      \scriptsize
      \begin{minipage}{0.65\hsize}
        \caption{}\label{table: numerical experiment}
        Sample mean (with standard deviation in parentheses) of LSEs, based on 10,000 sample paths from the OU process \eqref{eq: OU process} with $(\theta_0, x_0)=(1.0, 1.0)$.
        We emphasize the best average of LSEs for each $(\varepsilon,n)$ using a bold font.
        \begin{center}
          \begin{tabular}{rccc}
            \hline
            $\varepsilon = 1.0$
                    & $n=10$ & $n=100$             & $n=1000$ \\
            \hline
            % 10,000 sample data
            Euler & 1.663489 (1.471654) & 1.931070 (1.821037) & 1.966545 (1.873523) \\
            AM1   & 0.951550 (1.283734) & 0.802641 (1.038229) & 0.790167 (1.010930) \\
            AM2   & 1.028716 (1.564615) & \pmb{0.987162} (1.187243) & \pmb{0.986894} (1.152948) \\
            AM3   & 1.074215 (1.833185) & 1.078157 (1.261705) & 1.084002 (1.225335) \\
            AM4   & 1.087510 (2.127400) & 1.131508 (1.309929) & 1.146718 (1.273696) \\
            AM5   & \pmb{1.026782} (2.333814) & 1.167348 (1.354262) & 1.190314 (1.307555) \\
            AM6   & 0.884837 (2.779585) & 1.192481 (1.387997) & 1.224354 (1.336056) \\
            \hline
            \\
            \hline
            $\varepsilon = 0.1$
                    & $n=10$ & $n=100$             & $n=1000$ \\
            \hline
            % 10,000 sample data
            Eular & 0.964199 (0.138219) & 1.010592 (0.151375) & 1.015443 (0.152811) \\
            AM1   & \pmb{1.004400} (0.154055) & \pmb{1.004306} (0.152030) & \pmb{1.004406} (0.151787) \\
            AM2   & 1.008660 (0.174779) & 1.006182 (0.153983) & 1.006442 (0.152153) \\
            AM3   & 1.012103 (0.198794) & 1.007291 (0.156226) & 1.007306 (0.152466) \\
            AM4   & 1.013956 (0.230109) & 1.007880 (0.157855) & 1.007970 (0.152656) \\
            AM5   & 1.015775 (0.269280) & 1.008047 (0.159778) & 1.008404 (0.152829) \\
            AM6   & 1.019984 (0.321273) & 1.008658 (0.162266) & 1.008764 (0.153157) \\
            \hline
            \\
            \hline
            $\varepsilon = 0.01$
                  & $n=10$ & $n=100$             & $n=1000$ \\
            \hline
            % 10,000 sample data
            Eular & 0.951791 (0.013711) & 0.995199 (0.014975) & 0.999686 (0.015110) \\
            AM1   & 0.999246 (0.015337) & 1.000049 (0.015147) & \pmb{1.000074} (0.015126) \\
            AM2   & 1.000177 (0.017310) & 1.000061 (0.015320) & 1.000102 (0.015141) \\
            AM3   & 1.000232 (0.019645) & 1.000070 (0.015533) & 1.000100 (0.015160) \\
            AM4   & 1.000138 (0.022662) &  1.000062 (0.015683) & 1.000110 (0.015169) \\
            AM5   & \pmb{1.000017} (0.026460) & \pmb{1.000026} (0.015867) & 1.000113 (0.015182) \\
            AM6   & 1.000139 (0.031419) & 1.000041 (0.016101) & 1.000117 (0.015209) \\
            \hline
          \end{tabular}
        \end {center}
        AM$\ell$: LSE via the Adams-Moulton method with order $\ell$ ($\ell=1,\dots,6$).
      \end{minipage}
    \end{center}
  \end{table}

  In Table \ref{table: numerical experiment}, we compute
  \begin{equation}
    \hat{\theta}_{n,\varepsilon} = - \frac{\sum_{k=1}^n (X_{t_k}^\varepsilon-X_{t_{k-1}}^\varepsilon)X_{t_{k-1}}^\varepsilon}{\frac{1}{n} \sum_{k=1}^n |X_{t_{k-1}}^\varepsilon|^2},
  \end{equation}
  and
  \begin{equation}
    \hat{\theta}_{n,\varepsilon,\ell} = - \frac{\sum_{k=\ell}^n (X_{t_k}^\varepsilon-X_{t_{k-1}}^\varepsilon)A_\ell b(\pmb{X}_{t_k:t_k-\ell}^\varepsilon)}{\frac{1}{n} \sum_{k=\ell}^n |A_\ell b(\pmb{X}_{t_k:t_k-\ell}^\varepsilon)|^2} \quad (\ell=1,\dots,6),
  \end{equation}
  by using a sample path $\{X_{t_k}^\varepsilon\}_{k=0}^{n}$ made by
  \begin{equation}
    X_{t_0}^\varepsilon = x_0, \quad X_{t_k} = e^{-\theta_0 \Delta t} X_{t_{k-1}}
    + \varepsilon \sqrt{\frac{1-e^{-2\theta_0\Delta t}}{2\theta_0}} N(0,1), \quad
    \Delta t = t_k - t_{k-1} = \frac{1}{n}
  \end{equation}
  as a well-known way of constructing an exact numerical solution of \eqref{eq: OU process}, where $N(0,1)$ is the standard normal variable.
  We iterate this computation 10,000 times and show their sample means and standard deviations in Table \ref{table: numerical experiment}.
  We also plot the sample means and 95\% confidence intervals of $\hat{\theta}_{n,\varepsilon,\ell}$ through iterations in Figure \ref{fig: numerical experiment}.

  \begin{figure}[t]
    \begin{center}
      \includegraphics[width=0.49\hsize]{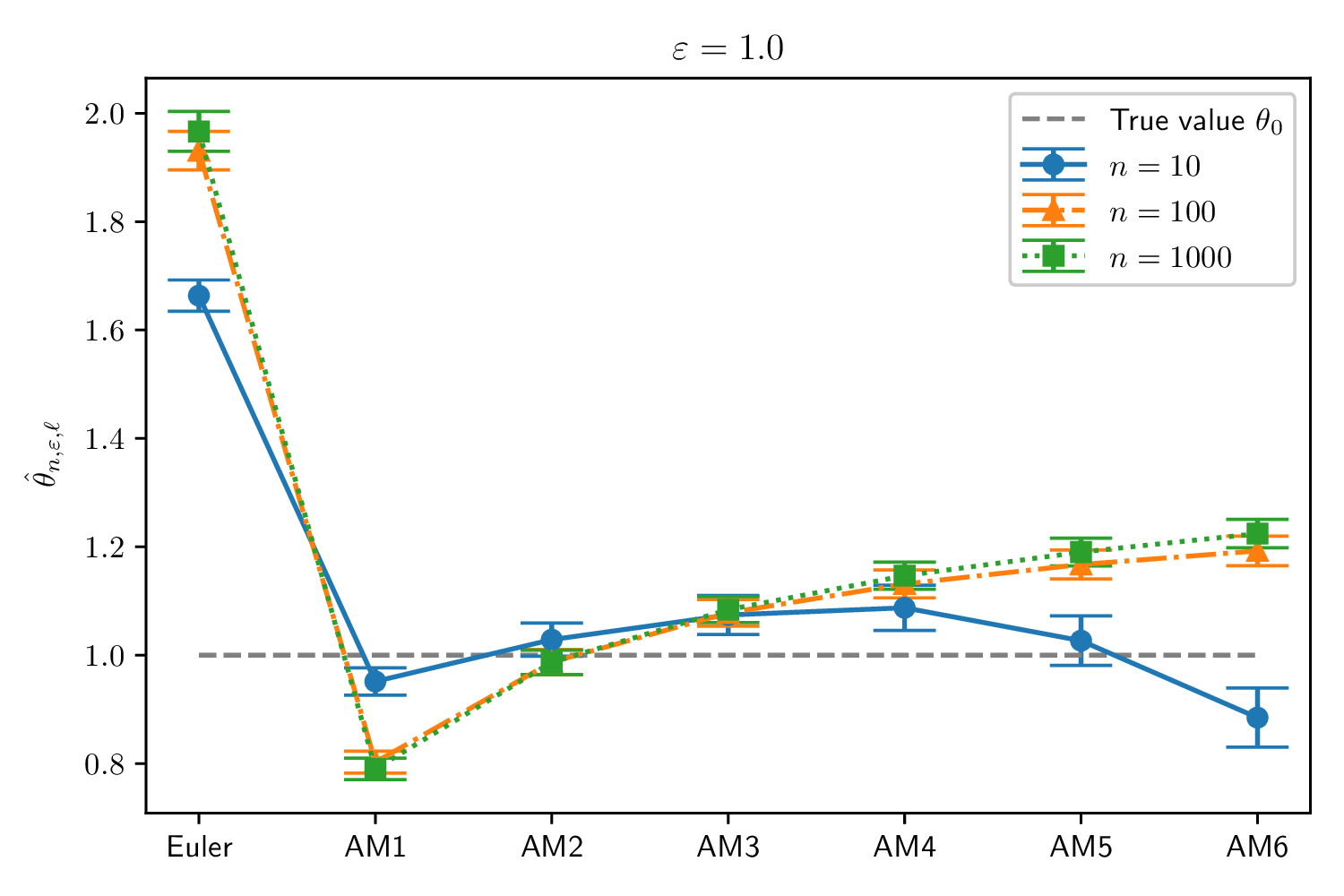}
      \includegraphics[width=0.49\hsize]{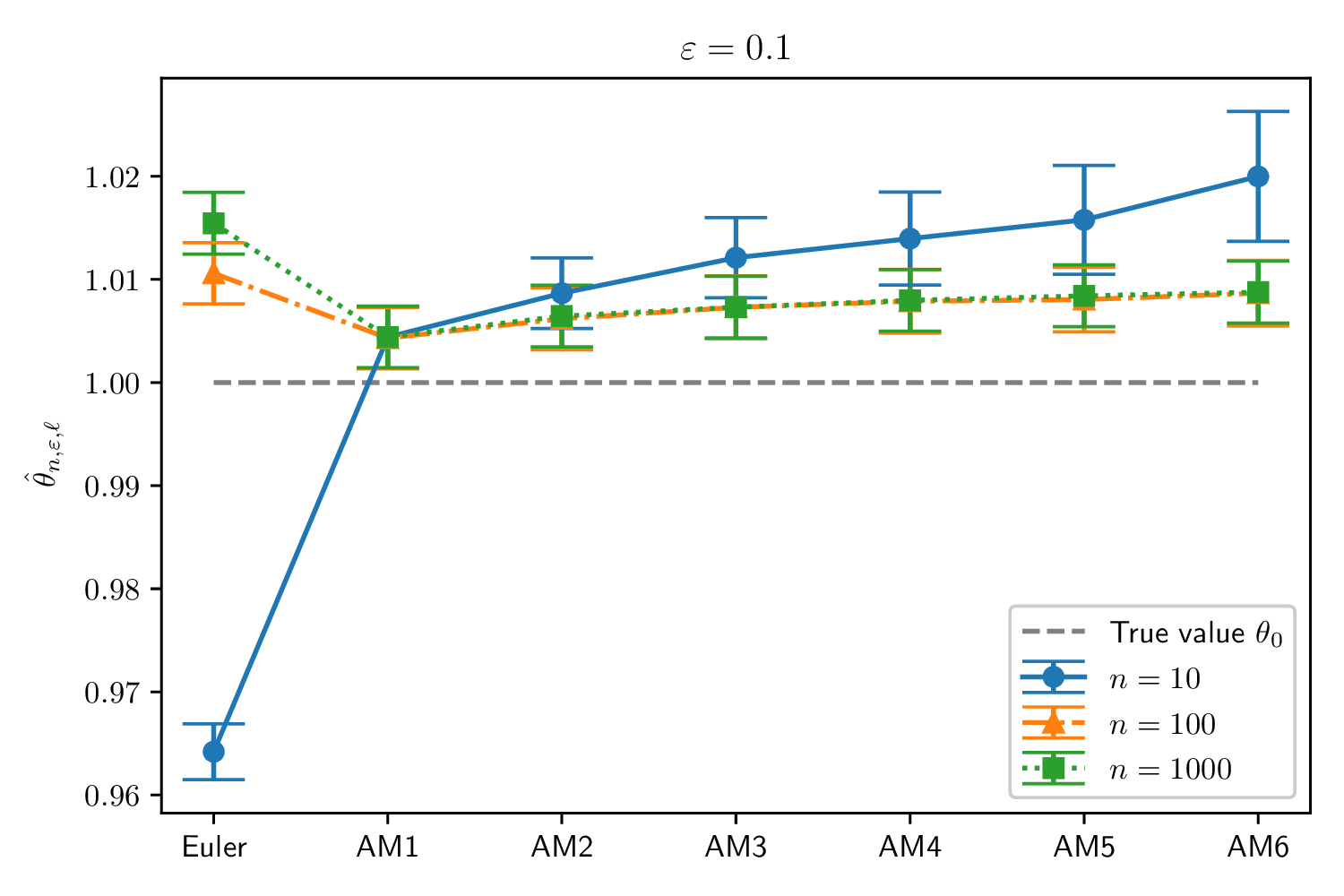}
      \includegraphics[width=0.49\hsize]{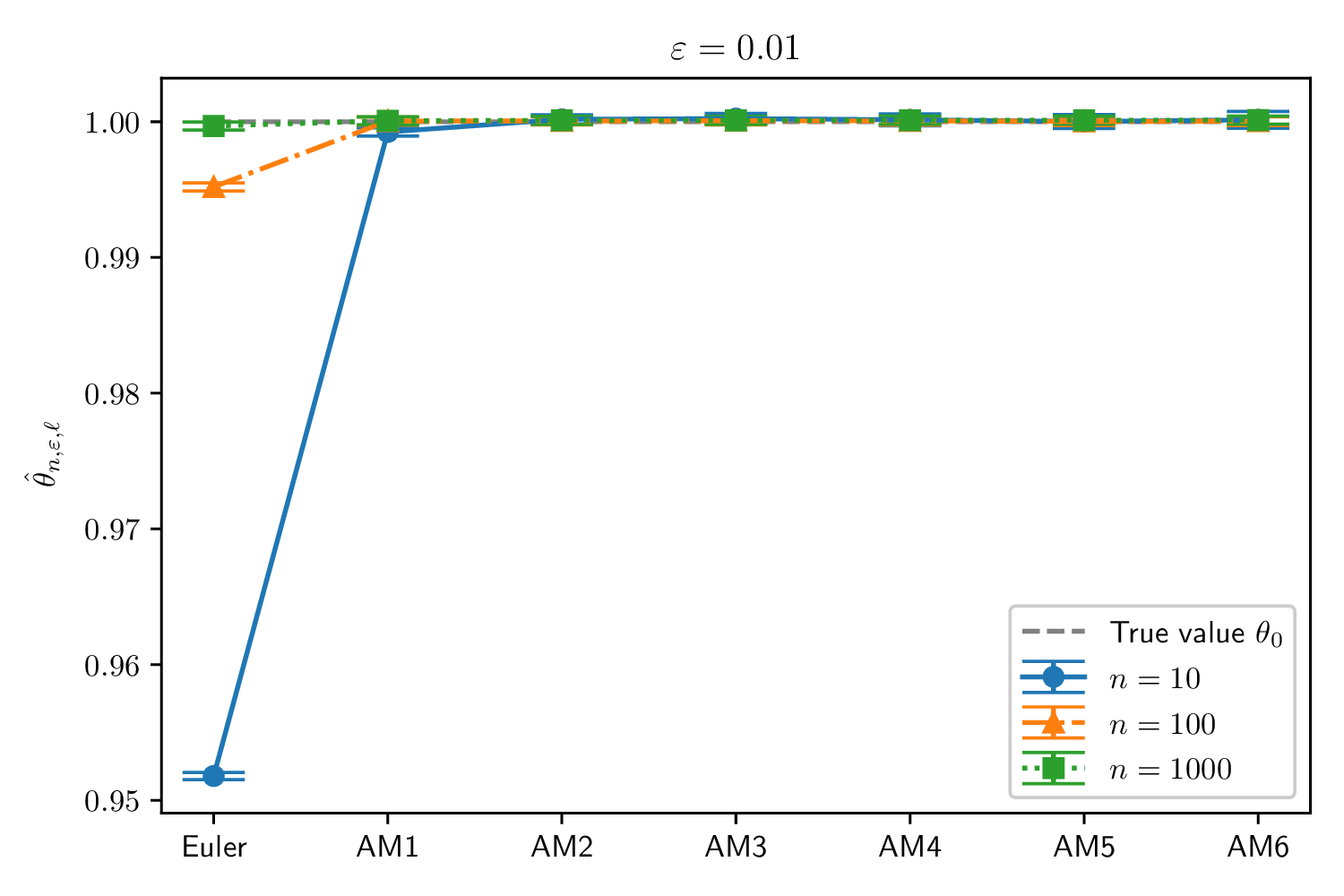}
      \caption{}\label{fig: numerical experiment}
      The means and 95\% confidence intervals through 10,000 \\
      iteration for $\hat{\theta}_{n,\varepsilon}$ (Euler)
      and $\hat{\theta}_{n,\varepsilon,\ell}$ (AM$\ell$, $\ell=1,\dots,6$).
    \end{center}
  \end{figure}

  \ \vspace{3mm}\\
  \begin{flushleft}
    {\bf\large Acknowledgements.}
    This research was partially supported by JSPS KAKENHI Grant-in-Aid for Scientific Research (A) \#17H01100 and JST CREST \#PMJCR14D7, Japan.
  \end{flushleft}

  \appendix

  \section{Appendix}

  \begin{lemma}\label{lem: L1-ineq for adams coef}
    Let $\gamma_{\ell\nu}$ and $\beta_{\ell\nu}$ be given by \eqref{eq: def of adams-bashforth} and \eqref{eq: def of adams-moulton}.
    Then,
    \begin{align}
      \sum_{\nu=1}^{\ell} \left | \gamma_{\ell\nu} \right | \leq \ell 2^{\ell-1} \quad (\ell=1,2,\dots), \qquad
      \sum_{\nu=0}^{\ell} \left | \beta_{\ell\nu} \right | \leq 2^{\ell} \quad (\ell=0,1,\dots).
    \end{align}
  \end{lemma}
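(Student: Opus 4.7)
The approach is to extract a pointwise bound on each Adams coefficient from its defining integral and then reduce the sum to the elementary identity $\sum_{\nu=0}^{\ell}\binom{\ell}{\nu}=2^{\ell}$. The central observation is that for $u\in[0,1]$ one has $0\leq u+j-1\leq j$ for every $j\geq 1$, while the single exceptional factor $(u-1)$ (present in $\beta_{\ell\nu}$ but not in $\gamma_{\ell\nu}$) satisfies $|u-1|\leq 1$.

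Starting from this estimate, I would first handle $\gamma_{\ell\nu}$. Bounding each factor in the product by its maximum on $[0,1]$ yields
$$\prod_{j=1,\,j\neq\nu}^{\ell}|u+j-1|\;\leq\;\prod_{j=1,\,j\neq\nu}^{\ell}j\;=\;\frac{\ell!}{\nu},$$
and hence
$$|\gamma_{\ell\nu}|\;\leq\;\frac{\ell!/\nu}{(\nu-1)!(\ell-\nu)!}\;=\;\binom{\ell}{\nu}.$$
Summing over $\nu=1,\dots,\ell$ gives $\sum_{\nu=1}^{\ell}|\gamma_{\ell\nu}|\leq 2^{\ell}-1$, which is bounded by $\ell\cdot 2^{\ell-1}$ for every $\ell\geq 1$ (equality at $\ell=1$; for $\ell\geq 2$ it follows from $2^{\ell}\leq \ell\,2^{\ell-1}$).

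For $\beta_{\ell\nu}$ I would split the analysis at $\nu=0$. When $\nu=0$ the product runs only over $j=1,\dots,\ell$ and is bounded by $\ell!$, so $|\beta_{\ell 0}|\leq 1$. When $\nu\geq 1$ the extra factor $|u-1|\leq 1$ is harmless, and the same argument gives $|\beta_{\ell\nu}|\leq \binom{\ell}{\nu}/\nu\leq\binom{\ell}{\nu}$. Summing then produces
$$\sum_{\nu=0}^{\ell}|\beta_{\ell\nu}|\;\leq\;1+\sum_{\nu=1}^{\ell}\binom{\ell}{\nu}\;=\;2^{\ell}.$$

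There is no real obstacle; the only mild care needed is the separate treatment of $\nu=0$ in the $\beta$-case so that the $1/\nu$ appearing in the bound does not cause trouble. The resulting inequalities are rather crude, but as used in the convergence lemmas of the main text, they are all that is needed.
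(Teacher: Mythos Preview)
Your argument is correct and follows essentially the same route as the paper: bound each factor $u+j-1$ on $[0,1]$ by $j$ (and $|u-1|$ by $1$), reduce to binomial sums, and invoke $\sum_{\nu}\binom{\ell}{\nu}=2^{\ell}$. The only cosmetic difference is that you retain the sharper factor $\ell!/\nu$ (yielding $\sum_{\nu=1}^{\ell}|\gamma_{\ell\nu}|\leq 2^{\ell}-1$ before comparing with $\ell\,2^{\ell-1}$), whereas the paper bounds the integral crudely by $\ell!$ and obtains $\sum_{\nu=1}^{\ell}\frac{\ell!}{(\nu-1)!(\ell-\nu)!}=\ell\,2^{\ell-1}$ directly.
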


  \begin{proof}
    The conclusion is obtained from
    \begin{equation}
      \sum_{\nu=1}^{\ell} \left | \gamma_{\ell\nu} \right |
      = \sum_{\nu=1}^{\ell} \frac{1}{(\nu-1)!(\ell-\nu)!} \int_0^1 \prod_{\substack{j=1 \\ j\neq\nu}}^\ell (u+j-1) \, du
      \leq \sum_{\nu=1}^{\ell} \frac{\ell!}{(\nu-1)!(\ell-\nu)!} = \ell 2^{\ell-1}
    \end{equation}
    for $\ell=1,2\dots$, and
    \begin{equation}
      \sum_{\nu=0}^{\ell} \left | \beta_{\ell\nu} \right |
      = \sum_{\nu=0}^{\ell} \frac{1}{\nu!(\ell-\nu)!} \int_0^1 \prod_{\substack{j=0 \\ j\neq\nu}}^\ell (u+j-1) \, du
      \leq \sum_{\nu=0}^{\ell} \frac{\ell!}{\nu!(\ell-\nu)!} = 2^{\ell}
    \end{equation}
    for $\ell=0,1,\dots$.
  \end{proof}

  \begin{lemma}\label{lem: ave int conv}
    Let $g$ be a continuous function on $\mathbb{R}^{d}$, let $t\mapsto y_t$ be an $\mathbb{R}^{d}$-valued continuous function on $[0,1]$, and let $\{f(\cdot,\theta)\}_{\theta\in\Theta}$ be a pointwise equicontinuous family of functions from $\mathbb{R}^{d}$ to $\mathbb{R}^d$.
    If $\ell/n\to0$ as $n\to\infty$,
    then
    \begin{equation}
      \frac{1}{n} \sum_{k=\ell\vee1}^{n} g \left ( \dashint_{t_{k-1}}^{t_k}f(y_t, \theta) \, dt \right )
      \to \int_0^1 g \circ f(y_t, \theta) \, dt
    \end{equation}
    as $n\to\infty$, uniformly in $\theta\in\Theta$.
  \end{lemma}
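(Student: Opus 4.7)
The plan is to reduce the claim to a routine Riemann-sum convergence after promoting the pointwise equicontinuity of $\{f(\cdot,\theta)\}_{\theta\in\Theta}$ to a uniform version on the compact set $K:=\{y_t:t\in[0,1]\}$, which is compact as the continuous image of $[0,1]$. A standard covering argument shows that a pointwise equicontinuous family on a compact metric space is in fact uniformly equicontinuous on it: for every $\eta>0$ there exists $\delta>0$ such that
\begin{equation}
\sup_{\theta\in\Theta}\left|f(z,\theta)-f(y,\theta)\right|<\eta\quad\text{for all }y,z\in K\text{ with }|z-y|<\delta.
\end{equation}
Combined with the uniform continuity of $t\mapsto y_t$ on $[0,1]$, this yields equi-uniform continuity of the family $\{t\mapsto f(y_t,\theta)\}_{\theta\in\Theta}$ on $[0,1]$.

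Next I would use this to replace the local average by a left-endpoint value. For $n$ large enough (uniformly in $k$ and $\theta$),
\begin{equation}
\left|\dashint_{t_{k-1}}^{t_k}f(y_t,\theta)\,dt - f(y_{t_{k-1}},\theta)\right|<\eta,\qquad k=\ell\vee1,\dots,n.
\end{equation}
The range $R:=\{f(y_t,\theta):t\in[0,1],\,\theta\in\Theta\}$ is bounded: fixing a reference point $y_0\in K$, $\{f(y_0,\theta)\}_{\theta}$ is bounded (as is implicit in the intended applications, where $f$ is jointly bounded on $K\times\Theta$), and the oscillation in $t$ is controlled by the uniform equicontinuity above. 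Hence $g$ is uniformly continuous on a compact neighborhood of $R$, so
\begin{equation}
\sup_{\theta\in\Theta}\max_{k=\ell\vee1,\dots,n}\left|g\!\left(\dashint_{t_{k-1}}^{t_k}f(y_t,\theta)\,dt\right)-g\bigl(f(y_{t_{k-1}},\theta)\bigr)\right|\longrightarrow 0.
\end{equation}

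Finally, the expression $\frac{1}{n}\sum_{k=\ell\vee1}^{n}g\bigl(f(y_{t_{k-1}},\theta)\bigr)$ is a left-endpoint Riemann sum on $[((\ell\vee1)-1)/n,1]$ for $\int_0^1 g\circ f(y_t,\theta)\,dt$; the missing initial interval has length $((\ell\vee1)-1)/n=O(\ell/n)\to 0$ and contributes a bounded-integrand error that vanishes. Since the integrand $t\mapsto g(f(y_t,\theta))$ is equi-uniformly continuous in $t$ (uniformly in $\theta$), being the composition of $g$, uniformly continuous on a compact set, with the equi-uniformly continuous family $t\mapsto f(y_t,\theta)$, the Riemann sum converges to the integral uniformly in $\theta$. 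The main (and essentially only) obstacle is the bookkeeping needed to keep every estimate uniform in $\theta$ simultaneously; once compactness of $K$ (and of $\Theta$) is used to upgrade pointwise equicontinuity to a uniform statement, each remaining step is standard.
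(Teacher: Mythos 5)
Your proposal is correct and follows essentially the same route as the paper: upgrade pointwise equicontinuity to uniform equicontinuity of $t\mapsto f(y_t,\theta)$ via compactness of $\{y_t\}$, show the local averages converge to $f(y_t,\theta)$ uniformly in $(t,\theta)$, apply (uniform) continuity of $g$ on a compact set containing the range, and dispose of the missing initial segment using $\ell/n\to0$. If anything, you are more explicit than the paper about the boundedness of the range $\{f(y_t,\theta)\}$ needed to invoke uniform continuity of $g$, a point the paper's proof leaves implicit.
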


  \begin{proof}
    Since $\{f(y_\cdot,\theta)\}_{\theta\in\Theta}$ is uniformly equicontinuous on $[0,1]$,
    for any $\eta>0$ there exists $N\in\mathbb{N}$ such that $\theta\in\Theta$, $|s-t|\leq1/N$
    $\Rightarrow$ $\left | f(y_s,\theta) - f(y_t,\theta) \right | < \eta$.
    Then, for all $n\geq N$, $t\in[0,1)$ and $\theta\in\Theta$
    \begin{equation}
      \left | \sum_{k=1}^{n} \pmb{1}_{[t_{k-1},t_k)}(t) \, \dashint_{t_{k-1}}^{t_k}f(y_s, \theta) \, ds - f(y_t,\theta) \right |
      \leq \sum_{k=1}^{n} \pmb{1}_{[t_{k-1},t_k)}(t) \, \dashint_{t_{k-1}}^{t_k} \left | f(y_s, \theta) - f(y_t,\theta) \right | \, ds < \eta,
    \end{equation}
    and we have
    \begin{equation}
      \sum_{k=1}^{n} \pmb{1}_{[t_{k-1},t_k)}(t) \, \dashint_{t_{k-1}}^{t_k}f(y_s, \theta) \, ds
      \to f(y_t,\theta)
    \end{equation}
    uniformly in $(t,\theta)\in[0,1)\times\Theta$.
    By the continuity of $g$, we obtain
    \begin{equation}
      \begin{aligned}
        \frac{1}{n} \sum_{k=1}^n g \left ( \dashint_{t_{k-1}}^{t_k}f(y_t, \theta) \, dt \right )
        &= \int_0^1 \sum_{k=1}^n \pmb{1}_{[t_{k-1},t_k)}(t) \, g \left ( \dashint_{t_{k-1}}^{t_k}f(y_s, \theta) \, ds \right ) dt \\
        &= \int_0^1 g \left ( \sum_{k=1}^n \pmb{1}_{[t_{k-1},t_k)}(t) \, \dashint_{t_{k-1}}^{t_k}f(y_s, \theta) \, ds \right ) dt
        \to \int_0^1 g \circ f(y_t, \theta) \, dt
      \end{aligned}
    \end{equation}
    as $n\to\infty$, uniformly in $\theta\in\Theta$.
    Since $\{g\circ f(y_{\cdot},\theta)\}_{\theta\in \Theta}$ is equicontinuous at $t=0$,
    for $\ell\geq2$,
    \begin{equation}
      \frac{1}{n} \sum_{k=1}^{\ell-1} g \left ( \dashint_{t_{k-1}}^{t_k}f(y_t, \theta) \, dt \right )\to0
    \end{equation}
    as $\ell/n\to0$, uniformly in $\theta\in\Theta$.
  \end{proof}

  Let $(\Omega,P,\mathscr{F})$ be a probability space, and let $\Sym_{p}(\mathbb{R})$ denote the set of all $p\times p$ symmetric matrix with real entries and with the Frobenius norm $\|\cdot\|_{F}$.

  \begin{lemma}\label{lem: v_n=M_n w_n}
    Suppose that $v_n\overset{p}\to v$ in $\mathbb{R}^p$ and $M_n\overset{p}\to M$ in $\Sym_p(\mathbb{R})$ as $n\to\infty$, $w_n$ satisfies $v_n=M_n w_n$.
    If $M$ is positive definite, $w_n\overset{p}\to M^{-1}v$.
  \end{lemma}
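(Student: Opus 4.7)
The plan is to reduce the problem to a direct application of the continuous mapping theorem, once we ensure that $M_n$ is invertible with probability tending to one. The key observation is that positive definiteness of $M$ is an open condition on $\Sym_p(\mathbb{R})$, so $M_n$ will inherit it on an event of probability tending to one.

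First, I would show that the smallest-eigenvalue functional $\lambda_{\min}:\Sym_p(\mathbb{R})\to\mathbb{R}$ is continuous (by Weyl's perturbation inequality, $|\lambda_{\min}(M_n)-\lambda_{\min}(M)|\leq\|M_n-M\|_F$), so that the continuous mapping theorem applied to $M_n\overset{p}{\to}M$ yields $\lambda_{\min}(M_n)\overset{p}{\to}\lambda_{\min}(M)>0$. Fixing any $\delta\in(0,\lambda_{\min}(M))$, the event $A_n:=\{\lambda_{\min}(M_n)>\delta\}$ therefore satisfies $P(A_n)\to 1$, and on $A_n$ the matrix $M_n$ is invertible, so the relation $v_n=M_n w_n$ forces $w_n=M_n^{-1}v_n$. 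Hence $P(w_n=M_n^{-1}v_n)\to 1$.

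Next, I would invoke the continuous mapping theorem a second time: matrix inversion is continuous on the open subset of invertible symmetric matrices. Since $M_n\overset{p}{\to}M$ and $M$ is (deterministically) invertible, we may define $M_n^{-1}$ arbitrarily on $A_n^c$ (say, as the identity) and conclude $M_n^{-1}\overset{p}{\to}M^{-1}$. Joint convergence in probability then gives $(M_n^{-1},v_n)\overset{p}{\to}(M^{-1},v)$, and applying the continuous map $(N,y)\mapsto Ny$ yields $M_n^{-1}v_n\overset{p}{\to}M^{-1}v$. Combined with $P(w_n=M_n^{-1}v_n)\to 1$, a standard argument gives $w_n\overset{p}{\to}M^{-1}v$.

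The only subtle point, and the step that requires care rather than cleverness, is handling the possibility that $w_n$ is not uniquely determined by $v_n=M_n w_n$ on the event $A_n^c$ where $M_n$ is singular. This is harmless because $P(A_n^c)\to 0$, so any pathological behavior of $w_n$ off $A_n$ is absorbed into an event of vanishing probability and does not disturb convergence in probability. No serious obstacle is expected; the lemma is essentially a packaging of Slutsky's theorem for matrix equations.
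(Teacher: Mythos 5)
Your proof is correct and follows essentially the same route as the paper: establish that $M_n$ is invertible on an event of probability tending to one, deduce $M_n^{-1}\overset{p}{\to}M^{-1}$, and absorb the singular event into vanishing probability before applying Slutsky-type continuity of $(N,y)\mapsto Ny$. The only difference is cosmetic: the paper proves the continuity of inversion at $M$ by hand via a Loewner-order sandwich $(M+\eta\mathbb{I})^{-1}\prec M_n^{-1}\prec(M-\eta\mathbb{I})^{-1}$, whereas you invoke the continuous mapping theorem and Weyl's inequality for $\lambda_{\min}$.
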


  \begin{proof}
    Let $\eta$ be an arbitrary positive number less than the smallest eigenvalue of $M$.
    If $\|M_n-M\|_{F}<\eta$, then $0\prec M-\eta\mathbb{I}_{p\times p}\prec M_n\prec M+\eta\mathbb{I}_{p\times p}$,
    where $\mathbb{I}_{p\times p}$ is the identity matrix of size $p$ and $\prec$ is the Loewner order.
    This implies that $M_n$ is invetible and
    \begin{equation}
      \left ( M + \eta \mathbb{I}_{p\times p} \right )^{-1} \prec M_n^{-1} \prec \left ( M - \eta \mathbb{I}_{p\times p} \right )^{-1}.
    \end{equation}
    Since $(M\pm \eta \mathbb{I}_{p\times p})^{-1}\to M^{-1}$ in $\Sym_{p}(\mathbb{R})$ as $\eta\to0$, there exists a positive number $\tilde{\eta}$ depending only on $M, p$ and $\eta$ such that $\|M_n^{-1}-M^{-1}\|_F<\tilde{\eta}$ and $\tilde{\eta} \to 0$ as $\eta\to0$.

    Set $\mathscr{D}_n:=\{\omega\in\Omega \, | \, M_n(\omega)~\text{is invertible}\}$.
    Then, if an arbitrary positive number $\tilde{\eta}$ is sufficiently small, for some $\eta>0$ we have
    \begin{equation}
      P(\mathscr{D}_n^\mathrm{C}) + P(\pmb{1}_{\mathscr{D}_n}\|M_n^{-1}-M^{-1}\|_F>\tilde{\eta})
      \leq 2 P (\|M_n^{-1}-M^{-1}\|_F>\eta)
      \to 0,
    \end{equation}
    where $\pmb{1}_A$ is the indicator function on a set $A\subset\Omega$.
    Hence, we obtain
    \begin{equation}
      w_n = M_n^{-1}v_n \pmb{1}_{\mathscr{D}_n} + w_n \pmb{1}_{\mathscr{D}_n^\mathrm{C}}
      \overset{p}\to M^{-1}v
    \end{equation}
    as $n\to\infty$.
  \end{proof}

\end{document}